\newtheorem{theorem}{Theorem}[section]
\newtheorem{lemma}[theorem]{Lemma}
\newtheorem{remark}[theorem]{Remark}
\renewcommand*\div{\mathbf{div}} 
\renewcommand*\div{\mathbf{div}} 
\newcommand{\intgwed}[1]{\int_{\omega_{\frac{\varepsilon}{2}}} #1 \, dx}
\newcommand{\intgwe}[1]{\int_{\omega_\varepsilon} #1 \, dx}
\newcommand{\intgg}[1]{\int_{\Gamma_1} #1 \, dS}
\newcommand{\intgo}[1]{\int_\Omega #1 \, dx}
\newcommand{\intt}[1]{\int_0^T #1 \, dt}
\newcommand{\ints}[1]{\int_{s_1}^{s_2} #1 \, dt}
\newcommand{\sigmazd}{\sigma_T^0(z)}
\newcommand{\sigmazu}{\sigma_T^0(u)}
\newcommand{\epsilonzd}{\epsilon_T^0(z)}
\newcommand{\epsilonzu}{\epsilon_T^0(u)}
\newcommand{\sigmau}{\sigma(u)}
\newcommand{\epsilonu}{\epsilon(u)}
\newcommand{\rt}{h_T(x)}
\newcommand{\hnu}{h_\nu(x)}
\newcommand{\gnu}{g_\nu(x)}
\newcommand{\gt}{g_T(x)}
\newcommand{\ft}{f_T(x)}
\newcommand{\fnu}{f_\nu(x)}
\newcommand{\deltap}{z^\prime}
\newcommand{\deltapt}{z^\prime_T}
\newcommand{\deltapnu}{z^\prime_\nu}
\title{STABILITY OF AN ELASTODYNAMIC SYSTEM	WITH LOCALIZED INTERNAL DAMPING AND ACOUSTIC BOUNDARY CONDITIONS}
\newif\ifuniqueAffiliation
\author{ 
		\hspace{1mm}{Abdelkhalek Balehouane}\thanks{Corresponding author: Abdelkhalek Balehouane} \\
	Mathematics department,\\
	 USTHB, Bab-ezzouar,\\ 16111, Algiers, Algeria\\
	\texttt{balehouane.abdelkhalek@gmail.com} \\
	\And
	\hspace{1mm}{Hicham Kasri} \\
	Mathematics department,\\
	USTHB, Bab-ezzouar,\\ 16111, Algiers, Algeria\\
	\texttt{h.kasri@hotmail.com} \\
	\AND
\hspace{1mm}{Rokia Kechkar} \\
National School Of Artificial Intelligence\\
Sidi Abdellah, Algiers, Algeria\\
\texttt{kechkarr@gmail.com} \\
}
\author[1,2]{%
}
\begin{document}
	\maketitle

	\begin{abstract}
	In this paper, we prove a stability result for an elastodynamic system with acoustic boundary conditions and localized internal damping, defined in a bounded domain $\Omega$ of $\mathbb{R}^3$. Here, the internal damping is only assumed to be locally distributed and satisfies suitable assumptions. The smooth boundary of $\Omega$ is $\Gamma=\Gamma_0\cup\Gamma_1$ such that $\overline{\Gamma_0}\cap\overline{\Gamma_1}=\emptyset$. On $\Gamma_0$, we consider the homogeneous Dirichlet boundary condition, and on $\Gamma_1$ , we consider the acoustic boundary condition without a damping term. More precisely, by making use of semigroup techniques, well-posedness results are discussed, as well as the asymptotic behavior of solutions. The difficulty in establishing the stability of the system arises from the presence of higher-order operators, normal derivatives, and some boundary terms. The key tools combine the multiplier approach, trace theorems, ideas from Frota and Vicenté \cite{FrotaVicente2018}, and new technical arguments.
	\end{abstract}

	\keywords{Elastodynamic system \and Uniform decay rates \and Acoustic boundary conditions \and Localized internal damping}

\section{Introduction}
In the early acoustics literature, the Robin boundary condition was traditionally applied to the wave equation. Researchers soon recognized that this approach was not physically accurate for modeling realistic acoustic energy dissipation and absorption at boundaries. It often failed to capture the complex interplay between sound waves and the material properties of the interface. Addressing this limitation, Morse and Ingard \cite{MorseIngard} proposed the acoustic boundary conditions. These conditions were later rigorously developed by Beale and Rosencrans \cite{BaelRose74, Bael76}. \noindent Many authors have since examined the stability of systems that are subject to these boundary conditions (see, for example \cite{FrotaBraz,FrotaLarkin2005,FrotaMedeirosVicente2011, FrotaMedeirosVicente2014,FrotaVicente20152016,FrotaVicente2018, GalGoldsteinAcoustic2004,Graber, GraberSaidHouari2012, TaeGabHa2016,TaeGabHa2018,Vicente2016,VicenteFrotamemory2017}). The literature that is currently available takes a range of approaches: some deal with models that have negligible boundary mass or not, while others focus on the effect of boundary damping, which can be either linear or nonlinear, in the impenetrability condition. Further distinctions are made based on whether the damping acts only on a part of the boundary or internally throughout the whole domain. We provide a brief overview of a few representative contributions below. For further references, consult the bibliography and the works cited therein.

One direction explored in the literature modeling the acoustic boundary condition, was by assuming that each point on $\Gamma$ behaves as an independent spring, reacting to excess pressure without transverse tension between neighboring points of \( \Gamma \). This physical context corresponds to a locally reacting boundary. The first study conducted, in this context, was in \cite{BaelRose74}, where the authors considered the linear problem

\begin{equation}\label{system_Bael}
	\left\{
	\begin{aligned}
		u^{\prime\prime}  - \Delta u  &= 0 && \text{in } \Omega \times (0,\infty), \\
		\frac{\partial u}{\partial \nu} &= z^\prime && \text{on } \Gamma \times (0,\infty), \\
		f(x) z^{\prime\prime}  + g(x) z^\prime  +h z + \rho_0 u^\prime   &= 0 && \text{on } \Gamma \times (0,\infty), \\
	\end{aligned}
	\right.
\end{equation}
and proved that there is no uniform rate of decay to the energy associated. Since then, many authors have investigated problems involving acoustic boundary condition. Among these contributions, we emphasize the work in \cite{FrotaLarkin2005}, where using a degenerative approach, the authors established global solvability and decay estimates to a linear wave equation with an acoustic boundary condition in the case of negligible boundary structure.

\noindent In the context of porous acoustic boundary conditions, Graber, Said-Houari, and collaborators \cite{GraberSaidHouari2012} studied a semilinear wave equation involving nonlinear source terms and damping effects, both within the interior of the domain and on its boundary. Using nonlinear semigroup theory, they established the existence and uniqueness of a local-in-time solution, which, under suitable assumptions on the nonlinearities, can be extended to a global-in-time solution. Moreover, they investigated, in different cases, the decay rate and the blow up in finite time of the solution.

\noindent Frictional boundary damping on a subset of the boundary has also been studied massively in literature, notably in the work of \cite{Benyettou2016}, where authors considered a viscoelastic wave equation with source term under acoustic boundary conditions with viscoelastic damping term. \noindent They stated the polynomial decay and the blowup of solutions in different situations by using energy methods. In a subsequent contribution, considering a variable-coefficient wave, Considering variable-coefficient wave equation, and with nonlinear acoustic boundary conditions and source term, \cite{Jianghao2019} established energy decay results. \\

Motivated by a physical configuration in which the boundary $\Gamma$ is modeled as an elastic membrane, a broader framework has been proposed. This idea was first introduced in \cite{FrotaMedeirosVicente2011}, in which the authors considered a wave equation on the boundary governed by the Laplace–Beltrami operator. In that work, they established the existence, uniqueness, and exponential stability for a mixed system incorporating the Carrier equation
\begin{equation}\label{system_FrotaMeniros2011}
	\left\{
	\begin{aligned}
		u^{\prime\prime} - M\left( \int_{\Omega} u^2 \, dx \right) \Delta u + a u' + b |u'|^\alpha u' &= 0 && \quad \text{in } \Omega \times (0,1),\\
		u &= 0 && \text{on } \Gamma_0 \times (0, \infty),\\
		\frac{\partial u}{\partial \nu} &= \delta' && \text{on } \Gamma_1 \times (0, \infty),\\
		f(x) \delta'' - c^2 \Delta_{\Gamma} \delta + g(x) \delta' + h(x) \delta +\rho_0 u'&= 0 && \text{on } \Gamma_1 \times (0, \infty).
	\end{aligned}
	\right.
\end{equation}

\noindent Since then, multiple papers have addressed different formulation of problem \eqref{system_FrotaMeniros2011} under the non-locally reacting boundary condition. For instance, authors in \cite{VicenteFrotamemory2017} considered the global solvability to a problem involving the wave equation with memory term and acoustic boundary conditions. Taking into account  the boundary structure mass, they overcome the difficulty by mimicking the technique in \cite{Messaoudi2008} and established general energy decay under the effect of memory term in the impermability boudary condition. \noindent Another remarkable result was established in Ha~\cite{TaeGabHa2018}. Author considered a variable coefficients wave equation with nonlinear damping on the domain. Under suitable conditions on the nonlinear damping, they improved their previous results~\cite{TaeGabHa2016}, and obtained a general decay result. Moreover, we should make mention of \cite{JiangHao2020}, where using suitable Lyapunov functionals and energy compensation method, a general decay estimates was obtained to a variable-coefficient problem with viscoelastic damping and a memory term on the impermeability condition, which is an improvement of the results in \cite{TaeGabHa2018}.\\

In this work, we present a novel elastodynamic problem formulation with acoustic boundary conditions of non-locally reacting type. The conceptual foundation for this approach is based on the work of \cite{GalGoldsteinAcoustic2004}, who established, via parameter-dependent energy space isomorphisms, an equivalence between the general Wentzell boundary conditions and the acoustic boundary conditions in \cite{MorseIngard, BaelRose74, Bael76}. Such systems arise in the context of elastic wave propagation with dynamic interactions at the boundary. The interior domain models a linear elastic solid, while a portion of the boundary is subject to acoustic-type conditions that capture local reactive effects. This kind of setting occurs, for example, in seismic wave propagation, where the Earth's crust behaves elastically and part of the surface interacts with the atmosphere or absorbing layers through dynamic response. The interested reader is refereed to \cite{Sanchez1989, Sandberg2008, Nedelec, Peron} and references therein for more details.\\

\noindent Here, we are concerned with the following acoustic boundary value problem
\begin{equation}\label{syst_PDE}
	\begin{cases}u^{\prime \prime}-\operatorname{div} \sigma(u)+a(x) u^{\prime}=0, & \text { in } Q=\Omega \times] 0, \infty[, \\
		u=0, & \text { on } \left.\Sigma_0=\Gamma_0 \times\right] 0, \infty[, \\ 
		f_T(x)z_T^{\prime\prime}-\operatorname{div}_T\left(\sigma_T^0(z)\right)+g_T(x) z_T^{\prime}+r_T(x) z_T+u_T^{\prime}=0 & \text { on } \left.\Sigma=\Gamma_1 \times\right] 0, \infty[, \\
		f_\nu(x) z_\nu^{\prime \prime}+\sigma_T^0(z)\odot \partial_T \nu -\triangle_T z_\nu + s_\nu(x) z_\nu^{\prime}+r_\nu(x) z_\nu+u_\nu^{\prime}=0 & \text { on } \left.\Sigma=\Gamma_1 \times\right] 0, \infty[, \\
		\sigma(u) \cdot \nu=z^{\prime} & \text { on } \left.\Sigma=\Gamma_1 \times\right] 0, \infty[, \\ 
		u(., 0)=u_0, u^{\prime}(., 0)=u_1, & \text { in } \Omega,\end{cases}
\end{equation}
where $\Omega$ is a bounded domain of $\mathbb{R}^3$ with a boundary $\Gamma=\partial \Omega$ of class $C^2$, which is divided into two disjoint parts: $\Gamma=\Gamma_0 \cup \Gamma_1$. 

\noindent The main goal of this paper is to explore the asymptotic behavior of system (\ref{syst_PDE}). We begin by establishing the well-posedness of the problem using the Lumer-Philips theorem. Subsequently, we derive, under suitable assumptions on $\Gamma$ and the function $a(x)$, an exponential stability result through the multiplier method. This method proves to be highly effective in solving such problems; it is based on energy estimates and Gronwall inequalities. Let us mention some seminal works utilizing this method. In \cite{Lionstome1}, J.-L. Lions developed the multiplier method as a powerful tool for studying the exact controllability of hyperbolic and parabolic problems. He derived observability estimates, ensuring that the total energy of the system could be controlled from a subset of the domain or its boundary. The pioneering work of Bardos et al. \cite{BLR} provides sufficient geometric conditions on the control region for exact controllability and stabilization to hold. Specifically, they require that every geometric optic ray intersects the control region. Later on, Burq and Gérard \cite{BG} demonstrated that these results remain valid under weaker regularity assumptions on the domain and operator coefficients. While these geodesic conditions are generally not explicit, they enable the derivation of energy decay estimates under some hypotheses. In \cite{Zuaz}, Zuazua establishes a geometric controllability condition for the semilinear wave equation with localized damping: $\partial_{tt}u-\Delta u+a(x)\partial_t u+f(u)=0$ in $\Omega\times\mathbb{R}_+$, where $a(x)\geq 0$ is a damping coefficient supported on $\omega\subset\Omega$. The principal result requires that: $\{x\in\Omega|\ dist(x,\partial\Omega\leq\varepsilon)\}\subseteq\omega$ for some $\varepsilon>0$, as we have considered in this work. Subsequent work by Fu et al. \cite{FYZ}, combined with Zuazua's framework \cite{Zuaz}, extends the exponential stabilization to geometries where: $\{x\in\Omega|\ (x-x_0)\cdot\nu\leq0\}\subseteq\omega$, with $x_0\in\mathbb{R}^n$  being an observation point and $\nu$ the outward normal. This geometric condition explicitly excludes damping configurations vanishing at antipodal points of spherical domains. The piecewise multiplier method developed by Liu \cite{Kliu} provides a generalization of these results by admitting multiple observation points $x_{0i}\in\mathbb{R}^n$, $i=1,\cdots, I$ (distinct observation points), and relaxing the geometric constraints to finite unions $\omega$ 
$$
\omega \supset \mathscr{N}_{\varepsilon}\left(\cup_{i=1}^I \Gamma_i\left(x^i\right) \cup\left(\Omega \backslash \cup_{i=1}^I \Omega_i\right)\right),
$$
where $
\Gamma_i\left(x^i\right)=\left\{x \in \partial \Omega_i,\left(x-x_0^i\right) \cdot v_i(x) \geq 0\right\}
$.

Following from our previous analysis, the question we address in this paper is the following: Can the system be stabilized by introducing a localized feedback acting only on a portion of the surface? If so, what geometric conditions must this surface satisfy? 

The principal contribution of this paper consists in establishing affirmative answers to both questions under appropriate geometric conditions. In fact, the difficulties of our work are summarized as follows:
\textbf{(i)} In the analysis of decay properties for systems with acoustic boundary conditions on non-locally reacting boundaries, one difficulty is to manipulate the term involving $\sigma(u):\varepsilon(u)$ on $\Gamma$ which incorporates both the normal derivative component $(\sigma(u)\cdot\nu)$ and the tangential stress-strain $\sigma_T(u):\varepsilon_T(u)$ (see Appendix); \textbf{(ii)} The acoustic boundary conditions for non-locally reacting boundaries present analytical challenges comparable to Wentzell boundary conditions (see for instance, \cite{GalGoldsteinAcoustic2004,Cav2009,Cav2012,KasriHemina2016,KasriHemina2017}) while simultaneously elevating the energy framework, as the natural energy functional incorporates the $H^1(\Gamma)$ Sobolev norm of the boundary trace (see (\ref{egalit_energy}) below). This energy enhancement necessitates examining whether a damping mechanism acting away from a portion of the boundary $\Gamma$, must effectively control the higher-order potential energy component despite acting at a distance; \textbf{(iii)} The impenetrability condition in equation $(\ref{syst_PDE})_{5}$ lacks a damping term, in contrast to several well-established works in the field (see, e.g., \cite{FrotaBraz,Graber, LP}). This absence introduces a significant challenge in the context of problem stabilization (the undamped portions of the boundary $\Gamma$ are subject to acoustic  boundary conditions); \textbf{(iv)} The inclusion of the term $z''$ in the dynamic Equ $(\ref{syst_PDE})_{3,4}$ (non-negligible mass i.e., $f\geq0$) complicates the derivation of the energy estimate. Indeed, at a key stage in the proof, after applying suitable multipliers, we arrive at an integral term of the form $\int_{\Gamma_1}f(x)\div_T(x-x_0)_T|z'|^2\ dS dt+\int_{\Gamma_1}f(x)z'(z_\nu\partial_T\nu)(x-x_0)_T\ dSdt$, which introduces analytical challenges; \textbf{(v)} The presence of the terms $u'_T$ and $u'_\nu$ in Equ $(\ref{syst_PDE})_{3,4}$ thus poses difficulties for stability analysis, because $u'$ in this equation is the trace of $u'$ defined on $\Omega$. A precise estimation of this term requires additional regularity, which is not guaranteed by the existence theorem. Our strategy is to combine the results due to Lions \cite{Lionstome1} with ideas from the work by Frota and Vicenté \cite{FrotaVicente20152016} and, mainly, those of Cavalcanti et al. \cite{Cav2009}, adapted to our situation, which are the key to proving our main result.

The plan for the rest of the paper is as follows: Section 2 introduces the necessary notations and assumptions needed in our work.  In Section 3, we state the problem and present our main results, including the well-posedness and stability theorems. Section 4 is devoted to the proof of the existence and uniqueness of solutions using semigroup theory. Section 5 establishes the uniform stability results using the multiplier methods.
	
	\section{Notations and assumptions}
	In this section, we introduce some notations and assumptions that will be used later. We denote by ' the time derivative, $z$ models the normal displacement of the point $x\in\Gamma$ in the time $t$ and $f, g, h$ are given real valued bounded functions defined on $\Gamma_1$, $\div_T$ stand for the tangential divergence, $\nu$ is the outward normal unit vector on $\Gamma_1$, $\sigma(u)=\left(\sigma_{i j}(u)\right)_{i, j=1}^3$ is the stress tensor given by $\sigma(u)=2 \alpha \varepsilon(u)+\lambda \operatorname{div}(u) I_3$, where $\lambda, \alpha>0$ are the Lam\'e coefficients, $I_3$ is the identity matrix of $\mathbb{R}^3$ and $\varepsilon(u)=\frac{1}{2}\left(\nabla u+(\nabla u)^T\right)=\left[\varepsilon_{i j}(u)\right]_{i, j=1}^3$ is a $3 \times 3$ symmetric matrix. From now on, a summation convention with respect to repeated indexes will be used.
	
	$\Delta$, $\div$ and $\nabla$ are the Laplacian operator, divergence, and gradient for spatial variables, respectively. On the other hand, $\Delta_{T}$, $\div_{T}$ and $\nabla_{\Gamma}$ represent the Laplace-Beltrami operator, the tangential gradient and the tangential divergence on $\Gamma_1$, respectively such that the following Stokes formula holds 
	\begin{equation}\label{stokes_formula}
		\int_{\Gamma_1}\div_T v_T\cdot u_T\ dS=-\int_{\Gamma_1} tr(v_T\cdot\pi \partial_{T} u_T)\ dS,\ \forall u_T, v_T \in H^1_0(\Gamma_1)^3,
	\end{equation}
	where $"tr"$ means the trace of a matrix and $\pi(x)$ is the orthogonal projection on tangent plane $T_x(\Gamma)$. Moreover, let $x \in \Gamma$, we denote by $\pi(x)$ the projection from $\mathbb{R}^3$ on the tangent plane $T_x(\Gamma)$ and for a given vector field $\varphi$ we have, for any point $x$ of $\Gamma, \varphi(x)=\varphi_T(x)+$ $\varphi_\nu(x) \nu(x)$, where $\varphi_T(x)=\pi(x)(\varphi(x))$ is the tangential component of $\varphi, \varphi_\nu(x)=$ $\varphi(x) \cdot \nu(x)$ and $\nu$ represents the unit outward normal field to $\Gamma, \sigma_S(\varphi)=2 \alpha \varepsilon_S(\varphi)$, $\sigma_\nu(\varphi)=(\lambda+2 \alpha) \varepsilon_\nu\left(\varphi_\nu\right)+\lambda \operatorname{tr}_2\left(\varepsilon_T(\varphi)\right)$, and $\sigma_T^0(\varphi)\odot \partial_T \nu=\operatorname{tr}\left(\sigma_T^0(\varphi) \cdot \partial_T \nu\right)$, with

	\begin{equation}\label{defini_sigma0_T}
		\sigma_T^0(\varphi)=2 \mu \varepsilon_T^0(\varphi)+\lambda^* \operatorname{tr}\left(\varepsilon_T^0(\varphi)\right) I_2,\quad \lambda^*=2\lambda \mu(\lambda+2 \mu)^{-1},
	\end{equation}

	\noindent where 
	\begin{equation}\label{defini_varepsilon0_T}
		\varepsilon_T^0(\varphi)=\varepsilon_T(\varphi)=\frac{1}{2}\left(\pi \partial_T \varphi_T \pi+\left(\pi \partial_T \varphi_T \pi\right)^t\right)+\varphi_\nu\left(\partial_T \nu\right),
	\end{equation}
	
	\begin{equation}\label{defini_varepsilon_S}
		\varepsilon_S(\varphi)=\frac{1}{2}\left(\partial_\nu \varphi_T-\left(\partial_T \nu\right) \varphi_T+(\partial_T \varphi_\nu)^t)\ \text{ and }\ \varepsilon_\nu(\varphi)=\partial_\nu \varphi_\nu\right. ,
	\end{equation}
	
	\noindent  in which $\partial_T \nu$ denote the curvature operator on $\Gamma, \partial_T$ (respectively $\partial_\nu$ ) denote the tangential (respectively normal) derivative, $I_2$ is the identity of the tangent plane and "tr" means the trace of a matrix. Furthermore, $\operatorname{div}_T \sigma_T^0(u)$ is the tangential divergence of the endomorphism field $\sigma_T^0(u)$, and $a(x)\in L^{\infty}(\Omega)$ is a nonnegative coefficient of the damping term. From now on, we will always assume the following conditions:
	
	\noindent \textbf{Assumption 1.} The function $a(x)$ satisfies
	\begin{equation}
		a(x)\in L^{\infty}(\Omega),\ \ a(x) \geq a_0 > 0 \text { over } \omega \subset \Omega\tag{H1}, \label{cond_def_a_damping}
	\end{equation}
	where $\omega$ is a neighborhood of the boundary $\Gamma_0$ and $a_0$ is a positive constant.

	\noindent \textbf{Assumption 2.} The functions $f(x)$, $g(x)$, $h(x):$ $\Gamma_1\rightarrow\mathbb{R}^+$ are essentially bounded such that
	\begin{equation}
		f(x)\ge f_0, g(x)\ge g_0, h(x)\ge h_0 \tag{H2} \label{cond_f_g_h_ge_f0_g0_h0},
	\end{equation}
	where $f_0$, $g_0$ and $h_0$ are positive constants. Moreover, we assume that the following geometrical conditions hold true (see Figure \ref{drawing1}). 

	\noindent \textbf{Assumption 3.}
	There exists some $x_0\in\mathbb{R}^3$ and $\delta>0$ such that 
	\begin{align}
		\Gamma_0&=\{ x\in\Gamma;\text{  } (x-x_0)\cdot\nu\geq \delta>0\}\tag{H3},\label{geom_cond1}\\
		\Gamma_1&=\{x\in\Gamma;\text{ }(x-x_0)\cdot\nu\leq0\}\tag{H4}.\label{geom_cond2}
	\end{align}
	\begin{figure}[!h]
		\begin{center}
			\includegraphics[scale=0.4]{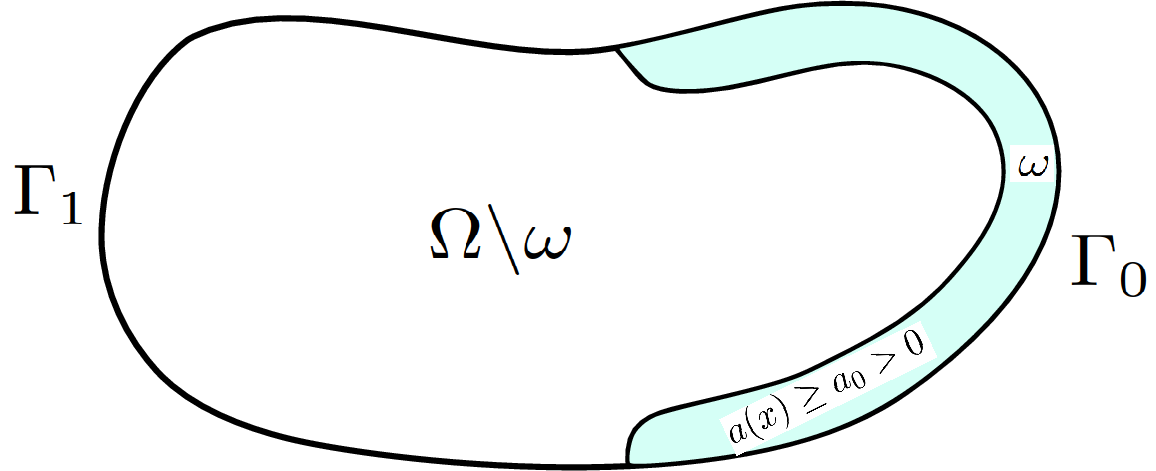}
			\captionof{figure}{Example of a domain $\Omega$ in the $\mathbb{R}^2$ case and localized damping region}
			\label{drawing1}
		\end{center}
	\end{figure}
\section{Statement of the problem and Main results}
To establish the existence and uniqueness of solutions to the main problem~\eqref{syst_PDE}, we formulate the problem within the framework of linear semigroup theory. Particularly, we reformulate system \eqref{syst_PDE} as an abstract Cauchy problem on an appropriately chosen Hilbert space. 
\noindent We define the phase space associated with our system (\ref{syst_PDE}) by
\begin{equation*}
	\mathbb{H}=\mathbb{V}\times L^2(\Omega)^3\times H^1_{\Gamma_1}\times L^2(\Gamma_1)^3,
\end{equation*}
where $\mathbb{V}$ is a Hilbert space defined by
\begin{equation*}
	\mathbb{V}=\left\{u\in H^1(\Omega)^3;\>\>u_{\mid_{\Gamma_0}}=0\right\},
\end{equation*}
equipped with the norm inducted by inner product in $\mathbb{V}$ as
\begin{equation}\label{norm_V}
	\langle u_1,u_2\rangle_{\mathbb{V}}=\int_\Omega\sigma(u_1)\odot\varepsilon(u_2)\ dx,
\end{equation}

\begin{equation}
	\langle z_1, z_2\rangle_{H^1_{\Gamma_1}}=\intgg{h(x)z_1\ z_2} + \intgg{\sigma_T^0(z_1)\odot\epsilon_T^0(z_2)} + \intgg{\nabla_T z_{1\nu}.\nabla_T z_{2\nu}},
\end{equation}
and 
\begin{equation}
	\langle w_1, w_2\rangle_{L^2_{\Gamma_1}}=\intgg{f(x)w_1.w_2} ,
\end{equation}
for which $``\odot"$ is defined as $\displaystyle\mathcal{A}\odot\mathcal{B}=\sum_{i,j=1}^{3}a_{ij}b_{ij}$ for real matrices $\mathcal{A}=(a_{ij})$ and $\mathcal{B}=(b_{ij})$.
\noindent Let us introduce a Poincaré-type inequality on the boundary $\Gamma_1$. This inequality will serve  in the upcoming sections.
\begin{remark}
	\item Let $\mathcal{C}_p$ be the smallest positive constant such that 
	\begin{equation}\label{poincar}
		\intgg{|u|^2\ dS}\leq \mathcal{C}_p^2\|u\|^2_{\mathbb{V}},\quad\forall u\in\mathbb{V}.
	\end{equation}
\end{remark}
\noindent Next we introduce the vector function $U=\left(u,v,z,w_T,w_\nu\right)^t$, then the equations in $\eqref{syst_PDE}$ is associated with the following abstract problem
\begin{equation}\label{prr1}
	\left\{
	\begin{array}{ll}
		\dot{U}(t)+\mathbb{A}U(t)=0, & \hbox{   } \\
		U(0)=U_0=(u_0,v_0,z_0, w_{0T}, w_{0\nu}), & \hbox{   }
	\end{array}
	\right.
\end{equation}
in which
\begin{equation}
	\forall U\in\mathbb{H},\ \displaystyle\mathbb{A}U=\left(
	\begin{array}{c}
		-v \\[1ex]
		-\div\sigma(u)+a(x)v \\[1ex]
		-w\\[1ex]
		f_T^{-1}\left(-\div_T(\sigmazd)+ g_T(x)w_T+ h_T(x)z_T+v_T\right)\\[1ex]
		f_\nu^{-1}\left(\sigmazd\odot\partial_T\nu - \triangle_Tz_\nu+g_\nu(x)w_\nu+h_\nu(x)z_\nu+v_\nu\right)
	\end{array}
	\right),
\end{equation}
where we have $u^\prime=v$ and $z^\prime=w$. The domain of $\mathbb{A}$ is given by
\begin{equation*}
	\mathcal{D}(\mathbb{A})=\left\lbrace
	\begin{array}{l}
		(u,v,z, w_T, w_\nu)\in\mathbb{H},\  v\in\mathbb{V},w\in L^2(\Gamma_1),\ \div\sigma(u)\in L^2(\Omega)^3,\sigmau\cdot\nu=w \text{ on } \Gamma_1\\[1ex]
	\end{array}
	\right\rbrace .
\end{equation*}
Thus, solving the system $\eqref{syst_PDE}$ is equivalent to solve the abstract Cauchy problem $(\ref{prr1})$ associated to the operator $\mathbb{A}$. Our solvability result is the following
\begin{theorem}\label{theorem_existence_uniqueness}
	Assume that assumptions \eqref{cond_f_g_h_ge_f0_g0_h0} and \eqref{cond_def_a_damping} hold, and suppose that $(u_0,v_0,z_0,w_{0T},w_{0\nu})\in \mathcal{D}(\mathbb{A})$, then the problem $\eqref{syst_PDE}$ has a unique (strong) solution in the class
	\begin{equation*}
		(u,v,z,w_{T},w_{\nu})\in \mathcal{C}(\mathbb{R}_+,\mathcal{H}).
	\end{equation*}
	Moreover, for all $(u_0,u_1,z_0,z_{0T},z_{0\nu})\in \mathbb{H}$, the problem  $\eqref{syst_PDE}$ has a unique (weak) solution in the class
	\begin{equation*}
		(u,v,z,w_T,w_\nu)\in W^{1,\infty}(\mathbb{R}_+,\mathcal{H})\cap L^\infty(\mathbb{R}_+, \mathcal{D}(\mathbb{A})).
	\end{equation*}
\end{theorem}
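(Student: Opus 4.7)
The plan is to apply the Lumer--Phillips theorem to the operator $-\mathbb{A}$ on $\mathbb{H}$. Once $-\mathbb{A}$ is shown to be the infinitesimal generator of a $C_0$-semigroup of contractions $(e^{-t\mathbb{A}})_{t\ge 0}$, existence and uniqueness of strong solutions for $U_0\in\mathcal{D}(\mathbb{A})$ follow immediately, and the weak regularity statement is obtained by the usual density of $\mathcal{D}(\mathbb{A})$ in $\mathbb{H}$ together with the contractivity of the semigroup.

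\textbf{Dissipativity.} For $U=(u,v,z,w)\in\mathcal{D}(\mathbb{A})$ I would expand
\begin{equation*}
\langle \mathbb{A}U,U\rangle_{\mathbb{H}} = -\langle v,u\rangle_{\mathbb{V}} + \int_\Omega\bigl(-\div\sigma(u)+a(x)v\bigr)\cdot v\, dx - \langle w,z\rangle_{H^1_{\Gamma_1}} + \langle (\mathbb{A}U)_4, w\rangle_{L^2(\Gamma_1)^3}.
\end{equation*}
Integration by parts on $\Omega$ together with the symmetry of $\sigma$ gives
$\int_\Omega(-\div\sigma(u))\cdot v\, dx = \langle u,v\rangle_{\mathbb{V}} - \int_{\Gamma_1}(\sigma(u)\cdot\nu)\cdot v\, dS$,
so the $\mathbb{V}$-cross term cancels and the remaining boundary integral becomes $-\int_{\Gamma_1} w\cdot v\, dS$ by the compatibility $\sigma(u)\cdot\nu=w$ built into $\mathcal{D}(\mathbb{A})$. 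On $\Gamma_1$, the Stokes formula (\ref{stokes_formula}) applied to $-\div_T\sigma_T^0(z)$, the decomposition (\ref{defini_varepsilon0_T}) of $\varepsilon_T^0(w)$ (whose normal piece $w_\nu\,\partial_T\nu$ exactly accommodates the $\sigma_T^0(z)\odot\partial_T\nu$ term from $(\mathbb{A}U)_5$), and a tangential Green identity for $-\Delta_T z_\nu$ (noting that $\Gamma_1$ has no geometric boundary since $\overline{\Gamma_0}\cap\overline{\Gamma_1}=\emptyset$) recombine to produce precisely $\langle z,w\rangle_{H^1_{\Gamma_1}}$, cancelling the third cross term, while the trace contributions $\int_{\Gamma_1} v\cdot w\, dS$ cancel pairwise. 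What survives is
\begin{equation*}
\langle \mathbb{A}U,U\rangle_{\mathbb{H}} = \int_\Omega a(x)|v|^2\, dx + \int_{\Gamma_1}\bigl(g_T(x)|w_T|^2 + g_\nu(x)|w_\nu|^2\bigr)\, dS \;\ge\; 0,
\end{equation*}
so $\mathbb{A}$ is accretive (equivalently, $-\mathbb{A}$ is dissipative).

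\textbf{Range condition.} I would next verify that $I+\mathbb{A}:\mathcal{D}(\mathbb{A})\to\mathbb{H}$ is surjective. Given $F=(f_1,f_2,f_3,f_4)\in\mathbb{H}$, the system $(I+\mathbb{A})U=F$ permits one to eliminate $v=u-f_1$ and $w=z-f_3$, reducing the problem to an elliptic coupled bulk-surface system for $(u,z)\in\mathbb{V}\times H^1_{\Gamma_1}$: the interior equation $(1+a(x))u-\div\sigma(u)=f_2+(1+a(x))f_1$ in $\Omega$, the Neumann-type coupling $\sigma(u)\cdot\nu=z-f_3$ on $\Gamma_1$, and a companion surface equation containing the tangential operators $\div_T\sigma_T^0(z)$ and $\Delta_T z_\nu$. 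Testing against $(\tilde u,\tilde z)\in\mathbb{V}\times H^1_{\Gamma_1}$ and using the same integration-by-parts identities as above yields a symmetric, continuous bilinear form on $\mathbb{V}\times H^1_{\Gamma_1}$; coercivity rests on Korn's inequality combined with the Dirichlet trace on $\Gamma_0$ (which makes $\langle\cdot,\cdot\rangle_{\mathbb{V}}$ equivalent to the $H^1(\Omega)^3$ norm) and on the strict positivity bounds (\ref{cond_f_g_h_ge_f0_g0_h0}). Lax--Milgram then delivers a unique solution, and standard elliptic regularity upgrades it so that $\div\sigma(u)\in L^2(\Omega)^3$, the tangential operators lie in $L^2(\Gamma_1)$, and the pointwise identity $\sigma(u)\cdot\nu=w$ holds, placing $U$ in $\mathcal{D}(\mathbb{A})$.

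\textbf{Density and conclusion.} Density of $\mathcal{D}(\mathbb{A})$ in $\mathbb{H}$ follows by approximating each component by smooth functions, using a lifting of $w\in L^2(\Gamma_1)^3$ to adjust the trace $\sigma(u)\cdot\nu$ to match the compatibility condition without affecting convergence in the product topology. Lumer--Phillips then provides the semigroup, whence the strong solution for $U_0\in\mathcal{D}(\mathbb{A})$ and, by density and contractivity, the weak solution with the regularity stated in the theorem. The main obstacle I anticipate is the range condition: the coupled bulk-surface variational formulation must be assembled so that the tangential-stress, Laplace--Beltrami and interior elasticity contributions combine through the trace relation $\sigma(u)\cdot\nu = z - f_3$ into a single coercive bilinear form; all other ingredients (dissipativity, density, elliptic regularity) are comparatively routine once this coupling is set up correctly.
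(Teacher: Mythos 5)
Your proposal follows essentially the same route as the paper: dissipativity of $\mathbb{A}$ via the same integration-by-parts/Stokes computation reducing $\langle\mathbb{A}U,U\rangle_{\mathbb{H}}$ to the damping terms, maximality via Lax--Milgram applied to the resolvent equation $(I+\mathbb{A})U=k$ on $\mathbb{V}\times H^1_{\Gamma_1}$, and Lumer--Phillips to conclude. The only cosmetic difference is that you eliminate $(v,w)$ and solve for $(u,z)$ whereas the paper eliminates $(u,z)$ and solves for $(v,w)$; note in passing that the resulting bilinear form is not symmetric as you claim---the bulk--surface coupling terms are antisymmetric---but this is harmless since they cancel on the diagonal and Lax--Milgram does not require symmetry.
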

On the other hand, our  main stability results is stated in the following theorem
\begin{theorem}\label{theorem_uniform_stability}
	Assume that assumptions \eqref{cond_def_a_damping}, \eqref{cond_f_g_h_ge_f0_g0_h0}, \eqref{geom_cond1} and \eqref{geom_cond2} hold. Then for each set of initial conditions $(u_0, z_0)\in \mathbb{V}\times L^2(\Omega)^3$ there exist two positive constants $ \mathcal{K}_1$ and $\mathcal{K}_2$ such that 
	\begin{equation}
		E(t)\le \mathcal{K}_1 e^{-\frac{t}{\mathcal{K}_2}}E(0),  \forall t\ge 0, 
	\end{equation} 
	for all $(u, z)$ solutions of \eqref{syst_PDE}. 
\end{theorem}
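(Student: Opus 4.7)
The plan is to combine the multiplier method of Lions with the techniques of Cavalcanti et al. and Frota--Vicent\'e, aiming at an observability inequality of the form $E(T)\le C\int_0^T\mathcal{D}(t)\,dt$ where $\mathcal{D}$ collects the dissipation, and then to conclude via the standard semigroup decay lemma.

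First I would establish the dissipation identity by multiplying the five equations of \eqref{syst_PDE} by $u'$, $z_T'$ and $z_\nu'$ respectively, using the coupling $\sigma(u)\cdot\nu=z'$ on $\Gamma_1$ together with the Stokes formula \eqref{stokes_formula}. The cross contributions from $u_T'z_T'$ and $u_\nu' z_\nu'$ cancel exactly against the boundary flux produced when integrating by parts $\intgo{\div\sigma(u)\cdot u'}$, yielding
\[
E'(t)=-\intgo{a(x)|u'|^2}-\intgg{g_T(x)|z_T'|^2}-\intgg{s_\nu(x)|z_\nu'|^2}\le 0,
\]
so $E$ is nonincreasing. The next step is the main multiplier computation: apply $m\cdot\nabla u$ with $m(x)=x-x_0$ inside $\Omega$, and the tangential multipliers $m_T\cdot\nabla_T z_\nu$ and $m_T\cdot \partial_T z_T$ on $\Gamma_1$, together with the lower-order multipliers $u$ and $z$. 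After integration by parts, the interior part produces the kinetic-minus-elastic identity plus a boundary flux $\int_\Sigma[\sigma(u):\varepsilon(u)](m\cdot\nu)\,dSdt$, and in view of \eqref{geom_cond1} this flux has the favorable sign on $\Gamma_0$ and is nonpositive on $\Gamma_1$ by \eqref{geom_cond2}. The remaining bad boundary contributions on $\Gamma_1$ carry the tangential quadratic form $\sigma_T^0(u):\varepsilon_T^0(u)$ and the normal derivative $\sigma(u)\cdot\nu=z'$; the former is absorbed using the decomposition recalled in the Appendix (difficulty (i)), while the latter is exchanged, via the coupling and the boundary multipliers on $z$, against the boundary energy terms $\sigma_T^0(z)\odot\varepsilon_T^0(z)$, $|\nabla_T z_\nu|^2$ and the dissipative quantities $g_T|z_T'|^2$, $s_\nu|z_\nu'|^2$.

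The hard part, matching difficulties (iv) and (v) stressed in the introduction, is the appearance of $f(x)|z'|^2$ and of the trace of $u'$ in the undamped impenetrability-type equations $\eqref{syst_PDE}_{3,4}$. For the former I would follow Frota--Vicent\'e: split $\intgg{f(x)\div_T(m_T)|z'|^2}$ and the curvature contribution $\intgg{f(x)z'(z_\nu\partial_T\nu)m_T}$, and absorb them by Young's inequality into $g_T|z_T'|^2+s_\nu|z_\nu'|^2$ up to a remainder proportional to $E(0)$ via the dissipation bound on $E'$. For the trace of $u'$, which is not controlled by the energy norm, I would import the strategy of Cavalcanti et al.: introduce a cut-off $\eta\in C_c^\infty(\Omega)$ equal to $1$ on a neighborhood of $\Gamma_1$ and supported in the damped set $\omega$ (possible because \eqref{cond_def_a_damping} puts $\omega$ around $\Gamma_0$, so one uses its complement's neighborhood differently via a Morawetz-type multiplier), and replace the uncontrolled boundary trace by an interior integral $\intgo{\eta|u'|^2}$ that is dominated by the internal damping; the missing direction is recovered by a hidden-regularity argument \`a la Lions for the elastic system applied to $u-\tilde u$, with $\tilde u$ an elliptic lifting of $z'$.

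Collecting the resulting estimates, multiplying by a large constant and absorbing the localized damping cross-term $\intgo{a(x)u'\cdot(m\cdot\nabla u)}$ through Young's inequality and \eqref{cond_def_a_damping}, I expect to reach, for $T$ sufficiently large,
\[
\int_0^T E(t)\,dt\le C\bigl(E(0)+E(T)\bigr)+C\int_0^T\mathcal{D}(t)\,dt\le C'E(0).
\]
Since $E$ is nonincreasing, this gives $T\,E(T)\le C'E(0)$ and an iteration on $[nT,(n+1)T]$ yields $E(t)\le \mathcal{K}_1 e^{-t/\mathcal{K}_2}E(0)$ for strong solutions; a density argument in $\mathbb{H}$ extends the inequality to weak solutions, completing the proof. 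The principal obstacle, as anticipated, will be the simultaneous treatment of the undamped coupling $u_T',u_\nu'$ on $\Gamma_1$ and the mass term $f(x)z''$, which forces the combination of interior hidden regularity with the boundary multiplier on $z$ to succeed in a single estimate.
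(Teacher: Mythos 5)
Your overall strategy coincides with the paper's: the dissipation identity, the multiplier $q=x-x_0$ in $\Omega$ combined with tangential multipliers built from $\pi\partial_T z_T\pi+z_\nu\partial_T\nu$ and $\partial_T z_\nu-z_T^t\partial_T\nu$ on $\Gamma_1$ and the lower-order multiplier $u$, the use of \eqref{geom_cond1}--\eqref{geom_cond2} for the signs of the flux, the absorption of the $f(x)|z'|^2$ contributions into the boundary dissipation, and the final step $TE(T)\le C E(0)$ followed by the semigroup iteration. However, two essential ingredients of the actual proof are missing, and without them your concluding chain $\int_0^T E(t)\,dt\le C(E(0)+E(T))+C\int_0^T\mathcal{D}(t)\,dt$ does not follow.

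First, the tangential multipliers generate undifferentiated boundary potential terms of the form $\int_0^T\int_{\Gamma_1}\bigl[\sigma_T^0(z)\odot\epsilon_T^0(z)+|z|^2+|\nabla_T z_\nu|^2\bigr]\,dS\,dt$. These are \emph{not} controlled by the dissipation, which only contains $a(x)|u'|^2$ and $g(x)|z'|^2$; this is precisely difficulty (ii) of the introduction. Your proposal to ``exchange'' them against the dissipative quantities cannot work because no time derivative of $z$ appears in these terms. The paper handles them through a separate lemma (Lemma~\ref{Lemma_estima_nabla_T_delta_nu}, in the spirit of Komornik's Lemma~8.12), which introduces an auxiliary elliptic problem and yields the bound $\tau^{-1}\widehat{C}E(0)+\tau\int_0^T E(t)\,dt$ with $\tau$ small, so that the $\tau\int_0^T E$ piece can be absorbed on the left. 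Some substitute for this step is indispensable. Second, after the boundary term $\int_0^T\int_{\Gamma_1}(\sigma(u)\odot\epsilon(u)-|u'|^2)\,dS\,dt$ is treated via the collar multiplier $k=\nu$ on $\Gamma$ and a cutoff $\xi_\varepsilon$ (the paper's Appendix, Lemma~\ref{Lemma_estimat_sigmau_epsilonu_decomposit_domain}), one is left with a genuinely lower-order remainder $R_{\xi\varepsilon}\int_0^T\int_{\omega_\varepsilon}|u|^2\,dx\,dt$ that is \emph{not} small and cannot be absorbed by Young's inequality into either $E$ or the dissipation. The paper removes it by a compactness--uniqueness contradiction argument (Aubin--Lions plus unique continuation, as in Lasiecka--Tataru and Frota--Medeiros--Vicente), producing $\int_0^T\int_{\omega_\varepsilon}|u|^2\le C(T)\int_0^T\mathcal{D}(t)\,dt$. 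Your sketch omits this entirely. A minor further point: the interior flux $\int_\Sigma((x-x_0)\cdot\nu)(|u'|^2-\sigma(u)\odot\epsilon(u))$ is not ``nonpositive on $\Gamma_1$''; since $(x-x_0)\cdot\nu\le0$ there, the $-\sigma(u)\odot\epsilon(u)$ part contributes with the \emph{bad} sign, which is exactly why the Appendix estimate is needed. Your alternative device for the trace of $u'$ (a cutoff supported in $\omega$ near $\Gamma_1$ plus hidden regularity for $u-\tilde u$) is also problematic as stated, since \eqref{cond_def_a_damping} only guarantees damping on a neighborhood of $\Gamma_0$, not of $\Gamma_1$.
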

\section{Existence and uniqueness results: Proof of Theorem\ref{theorem_existence_uniqueness}}
In this part of the paper we prove the well-posedness to problem~\ref{syst_PDE}. To achieve this goal, we rely on the semi group theory (see \cite{Pazylivre}). We need to establish that operator $\mathbb{A}$ is maximal monotone. We prove that A is monotone through Lemma~\ref{Lemma_monotonie} and that $(I-\mathbb{A})$ is surjective via Lemma~\ref{Lemma_maximal}.
\begin{lemma}\label{Lemma_monotonie}
	For all $U\in \mathcal{D}(\mathbb{A})$, we have
	\begin{equation*}
		\langle\mathbb{A}U,U\rangle_{\mathbb{H}}\geq0.
	\end{equation*}
\end{lemma}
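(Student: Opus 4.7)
The plan is a direct integration-by-parts computation that exploits the boundary identity $\sigma(u)\cdot\nu = w$ built into $\mathcal{D}(\mathbb{A})$, the symmetries of the elasticity tensors in $\Omega$ and on $\Gamma_1$, and the Stokes formula (\ref{stokes_formula}). Writing $U=(u,v,z,w_T,w_\nu)^t\in\mathcal{D}(\mathbb{A})$, I would unfold $\langle\mathbb{A}U,U\rangle_{\mathbb{H}}$ into the contributions coming from the five components and show that almost every term cancels in pairs, leaving only the damping contributions.

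First I would handle the interior block. The first component yields $\langle -v,u\rangle_{\mathbb{V}}=-\int_{\Omega}\sigma(v)\odot\varepsilon(u)\,dx$. For the second I integrate $-\int_{\Omega}\div\sigma(u)\cdot v\,dx$ by parts; since $v\in\mathbb{V}$ vanishes on $\Gamma_0$ and $\sigma(u)\cdot\nu = w$ on $\Gamma_1$, this produces $\int_{\Omega}\sigma(u)\odot\varepsilon(v)\,dx-\int_{\Gamma_1}w\cdot v\,dS$. The two bulk stress-strain pairings cancel by the symmetry $\sigma(u)\odot\varepsilon(v)=\sigma(v)\odot\varepsilon(u)$ inherited from the Lamé form of $\sigma$, so the interior block reduces to
\[
\int_{\Omega}a(x)|v|^2\,dx-\int_{\Gamma_1}w\cdot v\,dS.
\]

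Next I would treat the three boundary blocks simultaneously. The third component contributes $\langle -w,z\rangle_{H^1_{\Gamma_1}}$, i.e.\ $-\int_{\Gamma_1}[h(x)\,w\cdot z+\sigma_T^0(w)\odot\varepsilon_T^0(z)+\nabla_T w_\nu\cdot\nabla_T z_\nu]\,dS$. The fourth and fifth components are paired in the $L^2(\Gamma_1)^3$ inner product against $f_T w_T$ and $f_\nu w_\nu$, so the $f_T^{-1},f_\nu^{-1}$ weights in $\mathbb{A}U$ disappear. Applying (\ref{stokes_formula}) row-by-row to the symmetric endomorphism field $\sigma_T^0(z)$ gives $\int_{\Gamma_1}-\div_T\sigma_T^0(z)\cdot w_T\,dS=\int_{\Gamma_1}\mathrm{tr}(\sigma_T^0(z)\cdot\pi\partial_T w_T)\,dS$; the symmetry of $\sigma_T^0(z)$ lets me replace $\pi\partial_T w_T$ by its symmetric part, and combining with the $\sigma_T^0(z)\odot\partial_T\nu\cdot w_\nu$ term from the fifth component reconstitutes $\int_{\Gamma_1}\sigma_T^0(z)\odot\varepsilon_T^0(w)\,dS$ exactly as prescribed by definition (\ref{defini_varepsilon0_T}). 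A standard Green identity on $\Gamma_1$ turns $-\int_{\Gamma_1}\triangle_T z_\nu\, w_\nu\,dS$ into $\int_{\Gamma_1}\nabla_T z_\nu\cdot\nabla_T w_\nu\,dS$. The bilinear symmetry $\sigma_T^0(z)\odot\varepsilon_T^0(w)=\sigma_T^0(w)\odot\varepsilon_T^0(z)$ then makes these two recombined terms cancel with their counterparts from the third block, and the $h$-weighted $z\cdot w$ pieces cancel in the same way.

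What survives from the boundary blocks is $\int_{\Gamma_1}[g_T|w_T|^2+g_\nu|w_\nu|^2]\,dS+\int_{\Gamma_1}v\cdot w\,dS$, and the last integral cancels the boundary leftover from the interior block. Altogether
\[
\langle\mathbb{A}U,U\rangle_{\mathbb{H}}=\int_{\Omega}a(x)|v|^2\,dx+\int_{\Gamma_1}\bigl[g_T|w_T|^2+g_\nu|w_\nu|^2\bigr]\,dS\ge 0,
\]
by hypotheses (\ref{cond_def_a_damping}) and (\ref{cond_f_g_h_ge_f0_g0_h0}). The main obstacle in this argument is the tangential step: identifying $\mathrm{tr}(\sigma_T^0(z)\cdot\pi\partial_T w_T)$ with the full pairing $\sigma_T^0(z)\odot\varepsilon_T^0(w)$ requires that the curvature contribution $w_\nu\,\partial_T\nu$ hidden in $\varepsilon_T^0(w)$ be supplied by the boundary term $\sigma_T^0(z)\odot\partial_T\nu$ in the fifth line of $\mathbb{A}$. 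Without this delicate matching the cross-coupling between tangential and normal components would produce a residual term of indefinite sign and destroy monotonicity; its presence in $(\ref{syst_PDE})_4$ is exactly what makes the scheme close.
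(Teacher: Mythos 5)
Your proposal is correct and follows essentially the same route as the paper: pair each component of $\mathbb{A}U$ with $U$ in the weighted inner products, integrate by parts using $\sigma(u)\cdot\nu=w$ on $\Gamma_1$ and the Stokes formula \eqref{stokes_formula}, and observe that everything cancels except $\int_\Omega a(x)|v|^2\,dx+\int_{\Gamma_1}g(x)|w|^2\,dS$. Your explicit remark that the curvature contribution $w_\nu\,\partial_T\nu$ in $\varepsilon_T^0(w)$ is supplied by the $\sigma_T^0(z)\odot\partial_T\nu$ term of the fifth component is precisely the cancellation the paper uses (though states less explicitly), so there is nothing to add.
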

\begin{proof}
	Let be $U\in \mathcal{D}(\mathbb{A})$, by the definition of the inner product in $\mathbb{V}$ and $L^2(\Gamma_1)$, the boundary conditions in $\eqref{syst_PDE}$, Green's and Stokes formulas \eqref{stokes_formula}, one obtains
	\begin{align*}
		\langle\mathbb{A}U,U\rangle_{\mathbb{H}}&= -\intgo{\sigmau\odot\epsilon(v)} - \intgo{\div\sigma(u)\cdot v} +\intgo{a(x)|v|^2} - \intgg{h(x)w\cdot z}\\ &-\intgg{\sigma_T^0(w)\odot\epsilon_T^0(z)} +\intgg{\triangle_T z_\nu\cdot z} - \intgg{\div_T(\sigmazd)w_T} +\intgg{g_T(x)|w_T|^2}\\ &+\intgg{r_T(x)z_T\ w_T} +\intgg{v_Tw_T}+\intgg{\sigmazd\odot\partial_T\nu\cdot _\nu}+ \intgg{\gnu|w_\nu|^2}+ \intgg{\hnu z_\nu\cdot w_\nu},\\
		&+\intgg{u^\prime_\nu w_\nu}-\intgg{\triangle_Tz_\nu\ w_\nu}\\
		&=\intgo{a(x)|v|^2} +\intgg{g(x)|w|^2} \ge 0.	
	\end{align*}
	Thus, 
	\begin{align*}
		\langle\mathbb{A}U,U\rangle_{\mathbb{H}}&= -\intgo{\sigmau\odot\epsilon(v)} -\intgg{(\sigmau\cdot\nu)\cdot v} +\intgo{\sigma(u)\odot\epsilon(v)}  
		+\intgo{a(x)|v|^2} - \intgg{h(x)w\cdot z}\\ 
		&-\intgg{\sigma_T^0(z)\odot\epsilon_T^0(w)} -\intgg{\nabla_T z_\nu\cdot \nabla_Tw_\nu}
		+ \intgg{\sigmazd\odot\epsilon_T^0(w_T)} +\intgg{g_T(x)|w_T|^2}\\
		&+\intgg{h_T(x)z_T\ w_T} +\intgg{v_Tw_T}+\intgg{(\sigmazd\odot\partial_T\nu)\cdot w_\nu}+ \intgg{\gnu|w_\nu|^2}\\
		&+ \intgg{\hnu z_\nu\cdot w_\nu} +\intgg{v_\nu w_\nu} +\intgg{\nabla_Tz_\nu\cdot \nabla_Tw_\nu},\\
		&=\intgo{a(x)|v|^2} +\intgg{g(x)|w|^2} \ge 0,	
	\end{align*}
	\noindent then $\mathbb{A}$ is monotone. 
\end{proof}
\begin{lemma}\label{Lemma_maximal}
	The operator \( \mathbb{A} \colon D(\mathbb{A}) \subset \mathbb{H} \to \mathbb{H} \) satisfies the range condition
	\[
	\operatorname{Range}(I + \mathbb{A}) = \mathbb{H}.
	\]
	In particular, \( \mathbb{A} \) is maximal monotone and hence generates a \( C_0 \)-semigroup on \( \mathbb{H} \).
	\begin{proof}
		The proof consists in verifying the surjectivity of the operator \( I + \mathbb{A} \) by explicitly solving the resolvent equation \( (I + \mathbb{A})U = k \) for all \( k \in \mathbb{H} \). This is achieved by reducing the problem to a variational formulation and applying the Lax–Milgram lemma, which guarantees the existence and uniqueness of the solution. Clearly, $D(A)$ is dense in $\mathbb{H}$. \noindent Let $k:=(k_1,k_2,k_3,k_4,k_5)^t\in\mathbb{H}$, we have to solve
		\begin{equation}\label{egalit_surje}
			U:=(u,v,z,w_T,w_\nu)\in\mathcal{D(\mathbb{A})},\quad U+\mathbb{A}U=k,
		\end{equation}
		which means that 
		\begin{align}
			&u-v=k_1,\label{e1}\\
			\vspace{0.1cm}
			&v-\div\sigma(u)+a(x)v=k_2,\label{e2} \\
			\vspace{0.1cm}
			&z -w = k_3,\label{e3} \\
			\vspace{0.1cm}
			&w_T+ \frac{1}{f_T}\left(-\div_T(\sigmazd)+ g_T(x)w_T+ r_T(x)z_T+u^\prime_T\right) = k_4, \label{e4}\\
			&w_\nu + \frac{1}{f_\nu}\left(\sigmazd\odot\partial_T\nu - \triangle_Tz_\nu+s_\nu(x)w_\nu+r_\nu(x)z_\nu+u^\prime_\nu\right)=k_5 . \label{e5}
		\end{align}
		It can be easily shown that $v$ and $w$ satisfy 
		\begin{align}\label{variat_form_surjec}
			\left\{
			\begin{array}{ll}
				v-\div\sigma(v)+a(x)v= \xi_1,\\
				f_Tw_T -\div_T\sigma_T^0(w)+\gt w_T+\rt w_T +v_T = \xi_2, \\
				f_\nu w_\nu+\sigma_T^0(w)\odot\partial_T\nu -\triangle_T w_\nu + \gnu w_\nu+ \hnu w_\nu +v_\nu= \xi_3,\\
			\end{array}
			\right.
		\end{align}
		where $$\xi_1= k_2+\div\sigma(k_1),$$ $$\xi_2= f_Tk_4+\div_T\sigma_T^0(k_3) -\rt k_{3T},$$  $$\xi_3= f_\nu k_5-\sigma_T^0(k_{3\nu})\odot\partial_T\nu+\triangle_T k_{3\nu}-\hnu k_{3\nu}.$$
		We consider the variational formulation corresponding to $\eqref{variat_form_surjec}$ given by 
		\begin{equation}\label{variat_form_surjec1}
			{\Phi}\left((v,w_T, w_\nu), (\tilde{v},\tilde{w_T}, \tilde{w_\nu})\right)= \Psi(\tilde{v},\tilde{w_T}, \tilde{w_\nu}),
		\end{equation}
		with 
		${\Phi}:\mathbb{V}\times \mathbb{H}^1_{\Gamma_1}$, the bilinear form defined as
		\begin{align*}
			\Phi\left((v,w_T, w_\nu), (\tilde{v},\tilde{w_T}, \tilde{w_\nu})\right)&=\intgo{v\tilde{v}} + \intgo{\sigma(v)\odot\epsilon(\tilde{v})}  +\intgg{a(x)v\tilde{v}}\\
			& +\intgg{\left(f(x)+g(x)+h(x)\right)w\tilde{w}} +\intgg{\nabla_T w_\nu\cdot \nabla_T \tilde{w}_\nu}\\
			& +\intgg{\sigma_T^0(w)\odot\epsilon_T^0(\tilde{w})} 
		\end{align*}
		and the linear form $\Psi$ such that
		\begin{equation*}
			\Psi((v,w), (\tilde{v},\tilde{w}))= -\intgo{\sigma(k_1)\odot\epsilon(\tilde{v})}+\intgg{k_2\tilde{v}} -\intgg{\sigma_T^0(k_3)\odot\epsilon_T^0(\tilde{w})} -\intgg{h(x)k_3\tilde{w}} .
		\end{equation*}
		The boundedness of $\Phi$ is straightforward. Furthermore, we can readily demonstrate that $\Phi$ is coercive. Thus, from $\eqref{cond_f_g_h_ge_f0_g0_h0}$ we get 
		\begin{align}
			\Phi\left((v,w_T,w_\nu),(v,w_T,w_\nu)\right)&\ge \intgo{v^2}+ \intgo{\sigma(v)\odot\epsilon(v)} + \intgg{(f_0+g_0)(x)w^2}\nonumber\\
			& + h^{-1}_0\intgg{h(x)w^2}+ \intgg{(\nabla_Tw_\nu)^2} + \intgg{\sigma_T^0(w)\odot\epsilon_T^0(w)},\nonumber\\
			&\ge C\left[\|v\|^2_{\mathbb{V}} + \|w\|^2_{H^1_{\Gamma_1}}\right],
		\end{align}
		where $C=min\left\{1,h^{-1}_0\right\}$.	
	
		\noindent Moreover, $\Psi$ is continuous form on $\mathbb{V}\times H^1_{\Gamma_1}$. Then, it follows by Lax-Milgram's theorem that $\eqref{variat_form_surjec1}$ admits a unique solution $(v,w)\in \mathbb{V}\times H^1_{\Gamma_1}$ for every $k\in\mathbb{H}$. Next we reconstruct the remaining part of the vector $U$. From equations \eqref{e1}-\eqref{e5}, we have $u\in \mathbb{V}$, $\div\sigma(u)\in L^2(\Omega)$ and $z\in H^1_{\Gamma_1}$. 
		
		\noindent In conclusion, for all $k\in\mathbb{H}$ we obtain $U =(u,v,z,w_T,w_{\nu})\in D(\mathbb{A})$ such that $(I+\mathbb{A})U=k$. Hence, we claim by the Lumer-Phillips theorem (cf. \cite{Pazylivre}) that $\mathbb{A}$ generates a C$_0$-
		semigroup of contractions $(e^{t\mathbb{A}})_{t\geq 0}$ in $\mathbb{H}$.
	\end{proof}
\end{lemma}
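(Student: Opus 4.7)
The strategy is to prove the range condition by constructing, for each $k=(k_1,k_2,k_3,k_4,k_5)\in\mathbb{H}$, a unique $U=(u,v,z,w_T,w_\nu)\in D(\mathbb{A})$ solving $(I+\mathbb{A})U=k$, via reduction to a coercive variational problem on $\mathbb{V}\times H^1_{\Gamma_1}$ and the Lax--Milgram lemma. The component equations \eqref{e1} and \eqref{e3}, namely $u=v+k_1$ and $z=w+k_3$, are purely algebraic, so they let us eliminate $u$ and $z$ in favor of the unknowns $v$ and $w=(w_T,w_\nu)$. Substituting these into \eqref{e2}, \eqref{e4}, \eqref{e5} and collecting all data on the right produces the reduced system \eqref{variat_form_surjec} with source terms $\xi_1,\xi_2,\xi_3$ interpreted in the appropriate dual senses once the regularity of $k\in\mathbb{H}$ and the boundedness of the coefficients are accounted for.

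Next I would derive the variational formulation \eqref{variat_form_surjec1}. The reduced interior equation is multiplied by $\tilde v\in\mathbb{V}$, integrated over $\Omega$, and Green's formula for the elasticity operator $\div\sigma(\cdot)$ produces $\intgo{\sigma(v)\odot\varepsilon(\tilde v)}$ together with a boundary contribution $-\int_{\Gamma_1}(\sigma(v)\cdot\nu)\cdot\tilde v\,dS$. The tangential and normal surface equations are paired against $\tilde w_T,\tilde w_\nu$, the Stokes formula \eqref{stokes_formula} is applied to the term $\div_T\sigma_T^0(w)$, and an integration by parts handles $\triangle_T w_\nu$. Once the natural trace identification $\tilde v_{|\Gamma_1}=\tilde w$ is built into the test-function class, so that the unwanted Green boundary integral combines consistently with the $v_T,v_\nu$ contributions from the surface equations, the resulting identity is precisely $\Phi=\Psi$ with the bilinear and linear forms of the excerpt. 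Continuity of $\Phi$ follows from Cauchy--Schwarz, the boundedness of $a,f,g,h$, and continuity of the trace $\mathbb{V}\hookrightarrow L^2(\Gamma_1)^3$; continuity of $\Psi$ is clear from the regularity of the data. Coercivity is the quantitative core: the identity $\|v\|_{\mathbb{V}}^2=\intgo{\sigma(v)\odot\varepsilon(v)}$, hypothesis \eqref{cond_f_g_h_ge_f0_g0_h0}, and the estimate $\intgg{h(x)|w|^2}\geq h_0\intgg{|w|^2}$ yield
\[
\Phi\bigl((v,w_T,w_\nu),(v,w_T,w_\nu)\bigr)\geq C\bigl(\|v\|_{\mathbb{V}}^2+\|w\|_{H^1_{\Gamma_1}}^2\bigr),\qquad C=\min\{1,h_0^{-1}\},
\]
since the three contributions making up $\|w\|_{H^1_{\Gamma_1}}^2$ appear independently and with the correct sign. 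Lax--Milgram then delivers a unique $(v,w_T,w_\nu)\in\mathbb{V}\times H^1_{\Gamma_1}$.

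Finally, defining $u=v+k_1\in\mathbb{V}$ and $z=w+k_3\in H^1_{\Gamma_1}$, I would verify $U\in D(\mathbb{A})$. Testing the variational identity against $\tilde v\in C_c^\infty(\Omega)^3$ with vanishing surface part recovers the interior equation in the distributional sense, hence $\div\sigma(u)\in L^2(\Omega)^3$. A subsequent application of Green's formula against arbitrary $\tilde v\in\mathbb{V}$ and cancellation with the already established surface equations reads off the natural boundary condition $\sigma(u)\cdot\nu=w$ on $\Gamma_1$. Combined with the monotonicity of Lemma~\ref{Lemma_monotonie} and the density of $D(\mathbb{A})$ in $\mathbb{H}$ (which follows from the availability of smooth functions with prescribed boundary traces), this shows that $\mathbb{A}$ is maximal monotone, and the Lumer--Phillips theorem yields the contraction $C_0$-semigroup. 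I expect the principal technical obstacle to lie precisely in the proper setup of the variational formulation: the interior and surface test functions must be coupled so that $\sigma(u)\cdot\nu=w$ emerges cleanly as a natural boundary condition, without leaving spurious boundary integrals that would obstruct either coercivity of $\Phi$ or membership of $U$ in $D(\mathbb{A})$.
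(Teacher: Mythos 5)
Your proposal follows the same route as the paper: eliminate $u$ and $z$ via the algebraic equations, reduce to the coupled variational problem $\Phi=\Psi$ on $\mathbb{V}\times H^1_{\Gamma_1}$, obtain coercivity from \eqref{cond_f_g_h_ge_f0_g0_h0} and the definition of the norms, apply Lax--Milgram, reconstruct $U\in D(\mathbb{A})$, and conclude by Lumer--Phillips. The only difference is that you spell out the derivation of the variational formulation (Green/Stokes formulas, the trace coupling $\tilde v_{|\Gamma_1}=\tilde w$, and the recovery of $\sigma(u)\cdot\nu=w$ as a natural boundary condition), which the paper leaves implicit; this is a faithful elaboration, not a different argument.
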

\section{Uniform stability results:}
The aim of this part is to prove theorem~~\ref{theorem_uniform_stability}. This decay result is established using the multiplier method \cite{Lionstome1} and follows the analytical framework developed in \cite{Komornik1994}. In order to structure the ideas, this section is divided into three main parts, in the first part we define the energy associated to our problem and provide useful lemmas to the subsequent analysis. The second part focuses on deriving the fundamental identity. The last part consists on establishing a series of estimates leading to our decay result. 

\subsection{Energy equality}
Let us introduce the energy functions representing both the internal and boundary energy of the system \eqref{syst_PDE} as
\begin{equation}\label{egalit_energy}
	E(t)= \frac{1}{2} \left[\underbrace{\intgo{\left(|u'|^2+\sigma(u)\odot\epsilon(u)\right)}}_{E_\Omega(t)}+ \underbrace{\intgg{\left(|f(x)||z'|^2  + h(x)|z|^2 +\sigmazd\odot\epsilonzd +|\nabla_T z_\nu|^2\right)}}_{E_\Gamma(t)} \right].
\end{equation}
\begin{lemma}\label{lemma_energ_decrea}
	The energy defined in $\eqref{egalit_energy}$ is non increasing and satisfies $\forall s_1, s_2 $ such that $s_2> s_1\ge 0$, we have 
	\begin{equation}\label{egalite_energy_decreas}
		E(s_2)- E(s_1)=-\left(\ints{\intgo{a(x)|u'|^2}} +\ints{\intgg{g(x)|z'|^2}}\right) .
	\end{equation}
\end{lemma}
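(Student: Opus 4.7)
The plan is to carry out the standard energy identity computation by multiplying each of the three governing equations in \eqref{syst_PDE} by the matching time derivative and integrating over the appropriate domain. The computation is, in essence, a time-dependent version of the monotonicity calculation already performed in Lemma~\ref{Lemma_monotonie}: the same integrations by parts appear, but now the identity $\sigma(u)\odot\epsilon(u')=\tfrac{1}{2}\tfrac{d}{dt}\,\sigma(u)\odot\epsilon(u)$ (and its analogues for $\sigma_T^0(z)\odot\epsilon_T^0(z')$ and $\nabla_T z_\nu\cdot\nabla_T z'_\nu$) produces time derivatives of quadratic forms rather than pure dissipation.

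First I would multiply the bulk equation $\eqref{syst_PDE}_1$ by $u'$, integrate over $\Omega$, apply Green's formula to the $-\div\sigma(u)$ term, use $u'=0$ on $\Gamma_0$ (from $\eqref{syst_PDE}_2$), and substitute the impenetrability condition $\sigma(u)\cdot\nu=z'$ on $\Gamma_1$ from $\eqref{syst_PDE}_5$. This yields
\begin{equation}
\tfrac{1}{2}\tfrac{d}{dt}\intgo{\bigl(|u'|^2+\sigmau\odot\epsilonu\bigr)} + \intgo{a(x)|u'|^2} - \intgg{z'\cdot u'} = 0.
\end{equation}
Next I would multiply the tangential acoustic equation $\eqref{syst_PDE}_3$ by $z'_T$ and apply Stokes' formula \eqref{stokes_formula} to the $-\div_T(\sigmazd)$ term, and separately multiply the normal acoustic equation $\eqref{syst_PDE}_4$ by $z'_\nu$ and integrate by parts the $-\Delta_T z_\nu$ term. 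The tangential identity produces $\sigmazd\odot\epsilon_T^0(z'_T)$, while the normal identity contributes $(\sigmazd\odot\partial_T\nu)z'_\nu$; using \eqref{defini_varepsilon0_T} these combine into $\sigmazd\odot\epsilon_T^0(z')$, which equals $\tfrac{1}{2}\tfrac{d}{dt}\,\sigmazd\odot\epsilonzd$ by the symmetry of $\sigma_T^0$ in \eqref{defini_sigma0_T}. Summing the two boundary identities yields
\begin{equation}
\tfrac{1}{2}\tfrac{d}{dt}\intgg{\bigl(f(x)|z'|^2+h(x)|z|^2+\sigmazd\odot\epsilonzd+|\nabla_T z_\nu|^2\bigr)} + \intgg{g(x)|z'|^2} + \intgg{u'\cdot z'} = 0.
\end{equation}

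Adding the bulk and boundary identities, the coupling boundary terms $\mp\intgg{u'\cdot z'}$ cancel exactly — this cancellation is the mechanism by which $\sigma(u)\cdot\nu=z'$ transfers energy conservatively between $\Omega$ and $\Gamma_1$ — and what remains is precisely
\begin{equation}
\tfrac{d}{dt}E(t) = -\intgo{a(x)|u'|^2} - \intgg{g(x)|z'|^2}.
\end{equation}
Integrating this identity over $[s_1,s_2]$ gives \eqref{egalite_energy_decreas}, and the sign of the right-hand side (using $a\geq 0$ and $g(x)\geq g_0>0$ from \eqref{cond_f_g_h_ge_f0_g0_h0}) establishes that $E$ is non-increasing.

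There is no serious obstacle. The only bookkeeping care needed is in tracking signs for the coupling cancellation (the minus sign on $\intgg{z'\cdot u'}$ comes from Green's formula applied to $\div\sigma(u)$, while the plus sign on $\intgg{u'\cdot z'}$ comes from the explicit $u'_T$, $u'_\nu$ terms in $\eqref{syst_PDE}_{3,4}$) and in verifying that the Stokes contribution from the tangential equation together with the $\sigmazd\odot\partial_T\nu$ term from the normal equation reassembles correctly into $\sigmazd\odot\epsilonzd$ as demanded by \eqref{defini_varepsilon0_T}. The formal computation is rigorous for strong solutions $U_0\in\mathcal{D}(\mathbb{A})$ supplied by Theorem~\ref{theorem_existence_uniqueness}, and extends to weak solutions by density of $\mathcal{D}(\mathbb{A})$ in $\mathbb{H}$.
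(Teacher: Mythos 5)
Your proposal is correct and follows essentially the same route as the paper: multiply the bulk equation by $u'$ and the acoustic boundary equations by $z'_T$, $z'_\nu$, integrate by parts using Green's and Stokes' formulas, and observe that the coupling terms on $\Gamma_1$ cancel, leaving the dissipation identity which is then integrated over $[s_1,s_2]$. The paper states this computation more tersely, but the underlying steps — including the reassembly of $\sigmazd\odot\epsilonzd$ from the tangential and normal contributions — are exactly those you describe.
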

\begin{proof}
	Multiplying the first equation of $\eqref{syst_PDE}$ by $u'$, integrating over $\Omega$ by parts and using boundary conditions in $\eqref{syst_PDE}$, gives us
	\begin{align*}
		\frac{1}{2}\frac{d}{dt}&\left[\intgo{\left(|u'|^2 + \sigmau\odot\epsilonu\right)}+ \intgg{\left(f(x)|z'|^2 + h(x)|z|^2 + \sigmazd\odot\epsilonzd+|\nabla_T z_\nu|^2\right)}  \right]\\ 
		& =-\intgo{a(x)|u'|^2} -\intgg{g(x)|z'|^2},
	\end{align*}
	which can be rewritten as 
	\begin{equation*}
		\frac{d}{dt} E(t)= -\intgo{a(x)|u'|^2} -\intgg{g(x)|z'|^2}.
	\end{equation*}
\end{proof}

\begin{lemma}\label{Lemma_estima_q_nabla_u_u}
	Let $(u, z)$ be a strong solution of \eqref{syst_PDE}, then for all $t\ge 0$, there exists a positive constant $\xi$ such that 
	\begin{equation}\label{estima_q_nabla_u_u}
		\left\|2\Big[(x-x_0):\nabla u+u\Big]^2\right\|_{(L^2(\Omega))^3}\le \xi E(t).
	\end{equation}  
\end{lemma}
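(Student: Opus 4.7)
The plan is to bound the multiplier $2[(x-x_0)\cdot\nabla u + u]$ pointwise by using boundedness of the domain, and then invoke Korn's and Poincar\'e's inequalities on $\mathbb{V}$ to recover a bound by the elastic potential energy.

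First I would use that $\Omega$ is a bounded domain to pick $R>0$ with $|x-x_0|\le R$ for all $x\in\overline\Omega$. A pointwise Cauchy--Schwarz estimate then yields
\[
\bigl|2[(x-x_0)\cdot\nabla u + u]\bigr|^2 \le 8R^2\,|\nabla u|^2 + 8\,|u|^2 \quad \text{in } \Omega,
\]
and, after integration,
\[
\int_\Omega \bigl|2[(x-x_0)\cdot\nabla u + u]\bigr|^2\,dx \le 8R^2\,\|\nabla u\|_{L^2(\Omega)}^2 + 8\,\|u\|_{L^2(\Omega)}^2.
\]

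Next, since every $u\in\mathbb{V}$ satisfies $u|_{\Gamma_0}=0$ with $|\Gamma_0|>0$ (which is guaranteed by the geometric assumption \eqref{geom_cond1}), Korn's inequality furnishes a constant $C_K>0$ such that
\[
\|\nabla u\|_{L^2(\Omega)}^2 \le C_K \int_\Omega \sigma(u)\odot\varepsilon(u)\,dx,
\]
while Poincar\'e's inequality on $\mathbb{V}$ provides a constant $C_P>0$ with $\|u\|_{L^2(\Omega)}^2 \le C_P\,\|\nabla u\|_{L^2(\Omega)}^2$. Combining these two controls, the right-hand side is dominated by a constant multiple of $\int_\Omega \sigma(u)\odot\varepsilon(u)\,dx$, which by the definition \eqref{egalit_energy} of the energy is at most $2E_\Omega(t)\le 2E(t)$. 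Choosing $\xi$ as the resulting product of constants completes the proof.

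The argument is essentially routine, and I do not anticipate a substantive obstacle. The only point needing care is ensuring that the Dirichlet trace on $\Gamma_0$ together with $|\Gamma_0|>0$ permits simultaneous application of both Korn's and Poincar\'e's inequalities on $\mathbb{V}$; this is standard and follows from the assumption \eqref{geom_cond1}, which ensures $\Gamma_0$ has positive surface measure.
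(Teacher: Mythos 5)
Your proof is correct, and it takes a mildly but genuinely different route from the paper's. The paper expands the square exactly, writes the cross term $8u\cdot\big((x-x_0)\cdot\nabla u\big)$ as $4(x-x_0)\cdot\nabla(|u|^2)$, and applies Green's formula; this converts the cross term into $-8\int_\Omega|u|^2\,dx$ plus a boundary integral $4\int_{\Gamma}(x-x_0)\cdot\nu\,|u|^2\,dS$, which is then controlled by the boundary Poincar\'e-type inequality \eqref{poincar} and Korn's inequality. You instead dispense with the integration by parts entirely: the elementary pointwise bound $(a+b)^2\le 2(a^2+b^2)$ gives $8R^2|\nabla u|^2+8|u|^2$ directly, and the $|u|^2$ term is absorbed via the \emph{interior} Poincar\'e inequality on $\mathbb{V}$ rather than the boundary trace inequality. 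Your version is more elementary and avoids the boundary term altogether (which in the paper is in any case sign-favorable on $\Gamma_1$ under \eqref{geom_cond2} and vanishes on $\Gamma_0$); the paper's version keeps a slightly sharper constant in front of the gradient term ($4R^2$ versus your $8R^2$), but since $\xi$ is generic this buys nothing here. Both arguments rest on the same essential ingredient, namely Korn's inequality on $\mathbb{V}$, which requires $\Gamma_0$ to have positive surface measure --- a point you correctly flag and which the paper uses tacitly.
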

\begin{proof}
	Putting for brevity: $R=\|x-x_0\|_{L^\infty(\Omega)}$. 
	By a straightforward calculation we get
	\begin{equation*}
		\begin{split}
			2\intgo{ \Big((x-x_0):\nabla u+u\Big)^2}=& \intgo{ \Big[|2(x-x_0)\cdot\nabla u|^2+ 4|u|^2+ 8u\cdot (x-x_0)\cdot\nabla u\Big]},\\
			=& \intgo{\Big[|2(x-x_0):\nabla u|^2+ 4|u|^2+ 4 (x-x_0)\cdot\nabla (|u|^2)\Big]}.
		\end{split}
	\end{equation*}
	Using Green's formula, we obtain
	\begin{equation*}
		\begin{split}
			2\intgo{ ((x-x_0):\nabla u+u)^2}&=  \intgo{\Big[|2(x-x_0):\nabla u|^2- 8|u|^2\Big]}+ 4 \intgg{(x-x_0)\cdot\nu |u|^2},\\
			&\le 4R^2 \intgo{|\nabla u|^2}+ 4R \intgg{ |u|^2}.
		\end{split}
	\end{equation*}
	It follows from classical Korn's inequality that 
	\begin{equation*}
		2 \intgo{\Big((x-x_0):\nabla u+u\Big)^2} \le 4R^2 C\intgo{\sigmau\odot\epsilonu} + 4R\intgo{|u|^2}.
	\end{equation*}
	Using \eqref{poincar}, we infer
	\begin{align*}
		2\int_{\Omega} &\left((x-x_0):\nabla u+u\right)^2 dx\le \left(4R^2C +\mathcal{C}_p^2\right)\int_{\Omega}\sigmau\odot\epsilonu dx\\
		&+\mathcal{C}_p^2\left[\intgg{ \sigmazu\odot\epsilonzu} + \intgg{ |\nabla_T u_\nu|^2}\right]\le \xi E(t),
	\end{align*}
	and so \eqref{estima_q_nabla_u_u} holds with $\xi=C(R, \mathcal{C}_p)$.
\end{proof}
\noindent In order to handle tangential regularity, the next lemma provides a norm equivalence, which will be used later. For the detailed proof, the interested reader may see Proposition 2.1 \cite{Hemina1999}.
\begin{lemma}\label{Lemme_equiva_norm}
	We define in  $H^1(\Gamma_1)$ a norm given by 
	\begin{equation*}
		\| z_T\|^2_{H^1(\Gamma_1)}= \intgg{\Big(| z_T|^2+\epsilonzd\odot\epsilonzd\Big)},
	\end{equation*}
	which is equivalent to the norm $$z_T\rightarrow \left(\|z^1\|^2_{L^2(\Gamma_1)}+\|z^2\|^2_{L^2(\Gamma_1)}\right)^{\frac{1}{2}},$$
	where $z_T=z^1a_1+z^2a_2$,  $\| z_T\|^2_{L^2(\Gamma_1)}= \intgg{| z_T|^2}$ and $\epsilonzd=\frac{1}{2}\left(\pi\partial_T  z_T\pi+ (\pi\partial_T  z_T\pi)^t \right).$
\end{lemma}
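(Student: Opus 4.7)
The plan is to read the statement as a tangential Korn-type inequality on the surface $\Gamma_1$, equating the $H^1$-type norm built from the symmetrized strain $\varepsilon_T^0$ with the componentwise Sobolev norm of $(z^1,z^2)$ in a local orthonormal tangent frame. I would prove it by localizing to charts and reducing to the classical planar Korn inequality, with the curvature terms treated as $L^\infty$-bounded lower-order perturbations.

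First I would fix a finite atlas $\{(U_k,\varphi_k)\}$ covering $\Gamma_1$, a subordinate partition of unity $\{\chi_k\}$, and on each patch a smooth orthonormal tangent frame $(a_1,a_2)$. Because $\Gamma_1$ is of class $C^2$, the derivatives $\partial_T a_i$ and the curvature operator $\partial_T\nu$ are all uniformly bounded. Writing $z_T=z^1 a_1+z^2 a_2$, orthonormality gives the pointwise identity $|z_T|^2=|z^1|^2+|z^2|^2$, which takes care of the $L^2$ part after summing against the partition of unity. For the strain term, I would compute $\pi\partial_T z_T\,\pi$ in the frame: its matrix entries equal the intrinsic directional derivatives $\partial_{a_i} z^j$ plus linear combinations of $z^1,z^2$ whose coefficients are Christoffel symbols of the frame together with components of $\partial_T\nu$. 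Symmetrizing yields the pointwise comparison
\begin{equation*}
\bigl|\varepsilon_T^0(z_T)\odot\varepsilon_T^0(z_T)-\varepsilon(z^1,z^2)\odot\varepsilon(z^1,z^2)\bigr|\le C\bigl(|z^1|^2+|z^2|^2+|\nabla(z^1,z^2)|\,|z_T|\bigr),
\end{equation*}
where $\varepsilon(z^1,z^2)$ denotes the planar symmetrized gradient of the coordinate vector field in the chart.

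From this pointwise comparison, one inclusion of the equivalence follows at once by integrating and reassembling with the partition of unity. For the reverse inclusion I would pull back each localized piece $\chi_k z_T$ via $\varphi_k^{-1}$ to a planar domain, apply the classical two-dimensional Korn inequality
\begin{equation*}
\|\nabla(z^1,z^2)\|_{L^2}^2\le C\bigl(\|\varepsilon(z^1,z^2)\|_{L^2}^2+\|(z^1,z^2)\|_{L^2}^2\bigr),
\end{equation*}
push forward via the chart Jacobians (which are smooth and nondegenerate on $C^2$ charts), and reassemble through $\{\chi_k\}$ to obtain control of $\|z_T\|_{H^1(\Gamma_1)}$ by the right-hand side of the stated equivalence.

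The main obstacle — and the reason the result deserves a reference rather than an inline computation — is the bookkeeping of the curvature correction $\varphi_\nu\,\partial_T\nu$ that appears inside $\varepsilon_T^0$ in \eqref{defini_varepsilon0_T} together with the Christoffel-type corrections arising when one trades the intrinsic operator $\pi\partial_T(\cdot)\pi$ for componentwise partial derivatives in the chart. Since $\Gamma_1\in C^2$, all such terms carry $L^\infty$ coefficients and only generate additional contributions of the form $|z^1|^2+|z^2|^2$, which are reabsorbed into the $L^2$ part of the norm; the care required is only to ensure that this absorption does not interact badly with the rigid-motion kernel of the planar Korn inequality. The full detailed computation is carried out in Proposition~2.1 of \cite{Hemina1999}.
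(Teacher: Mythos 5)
The paper does not actually prove this lemma: it only points to Proposition~2.1 of \cite{Hemina1999}, so there is no in-paper argument to compare against. Your sketch --- localize with a partition of unity and an orthonormal tangent frame, compare the intrinsic symmetrized operator $\pi\partial_T(\cdot)\pi$ with the flat symmetrized gradient of the components up to zero-order terms carrying $L^\infty$ curvature/Christoffel coefficients, then invoke the planar Korn inequality --- is the standard proof of the surface Korn inequality, is correct in outline, and is presumably what the cited reference does. Three remarks. First, the statement as printed contains a typo: an $H^1$-type norm cannot be equivalent to $\left(\|z^1\|^2_{L^2(\Gamma_1)}+\|z^2\|^2_{L^2(\Gamma_1)}\right)^{1/2}$ (take a sequence oscillating at increasing frequency, bounded in $L^2$ but unbounded in $H^1$); the intended right-hand side is the componentwise $H^1$ norm, which is how you silently read it --- this should be flagged rather than assumed. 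Second, your concern about the rigid-motion kernel is moot: because the comparison norm already carries the full $L^2$ part, the relevant planar estimate is Korn's second inequality $\|\nabla v\|_{L^2}^2\le C\bigl(\|\varepsilon(v)\|_{L^2}^2+\|v\|_{L^2}^2\bigr)$, whose right-hand side does not vanish on rigid motions, so no quotienting or orthogonality normalization is needed; the cutoff commutator $\varepsilon(\chi_k z)-\chi_k\varepsilon(z)$ is likewise a zero-order term reabsorbed into the $L^2$ part. Third, note that the lemma's own statement redefines $\epsilonzd$ as the symmetrization of $\pi\partial_T z_T\pi$ alone, without the curvature correction $z_\nu\partial_T\nu$ of \eqref{defini_varepsilon0_T}, so the particular bookkeeping term you single out as the main obstacle does not even arise here; if one keeps the fuller definition it is, as you say, a bounded zero-order perturbation and changes nothing.
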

\begin{lemma}\label{Lemma_estima_nabla_T_delta_nu}
	Let $(u, z)$ be a strong solution of \eqref{syst_PDE}. Then for all $\tau>0$ (to be chosen later) there exists a positive  constant $\widehat{C}=\widehat{C}(\tau)$ independent on $T$ such that
	\begin{equation}\label{estima_nabla_T_delta_nu}
		\intt{\intgg{\left[ \sigmazd\odot\epsilonzd+| z|^2+ |\nabla_T z_\nu|^2\right]}}\le \tau^{-1}\widehat{C} E(0) +\tau\intt{E(t)}.
	\end{equation}
\end{lemma}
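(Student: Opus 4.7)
The strategy is to employ $z_T$ and $z_\nu$ as multipliers in the tangential and normal boundary equations $(\ref{syst_PDE})_3$ and $(\ref{syst_PDE})_4$ respectively. After multiplying, integrating over $\Gamma_1\times(0,T)$, and summing, I would integrate by parts in time on the $f_T z_T''$ and $f_\nu z_\nu''$ terms, producing the boundary contribution $[\intgg{f z'\cdot z}]_0^T$ and the kinetic contribution $-\intt{\intgg{f|z'|^2}}$. Applying Stokes' formula \eqref{stokes_formula} to the $\div_T\sigmazd\cdot z_T$ term and combining with the cross-term $(\sigmazd\odot\partial_T\nu) z_\nu$ arising from $(\ref{syst_PDE})_4$, the definitions \eqref{defini_sigma0_T}--\eqref{defini_varepsilon0_T} reconstitute the full $\intgg{\sigmazd\odot\epsilonzd}$; a further integration by parts on $\Delta_T z_\nu\cdot z_\nu$ yields $\intgg{|\nabla_T z_\nu|^2}$. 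The resulting identity places the target quantity on the LHS and four remainders on the RHS: the boundary-in-time term, the kinetic integral, the damping term $\int g z'\cdot z$, and the interaction term $\int u'\cdot z$.

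Three of the four remainders are readily bounded by $C E(0)$: the boundary-in-time term, via Cauchy--Schwarz and the monotonicity $E(t)\le E(0)$ from Lemma \ref{lemma_energ_decrea}; the kinetic integral, via the dissipation identity \eqref{egalite_energy_decreas}, since Assumption \eqref{cond_f_g_h_ge_f0_g0_h0} provides $g\ge g_0>0$, giving $\intt{\intgg{f|z'|^2}}\le (\|f\|_\infty/g_0)\intt{\intgg{g|z'|^2}}\le C E(0)$; and the damping term, via the time-independence of $g(x)$, which turns $\intt{\intgg{g z' z}}$ into $\tfrac12\intgg{g(|z(T)|^2-|z(0)|^2)}$, bounded by $C E(0)$ using $g\le \|g/h\|_\infty h$ and the energy bound.

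The main obstacle is the interaction term $\intt{\intgg{u'\cdot z}}$, because $u'|_{\Gamma_1}$ is merely the trace of an $L^2(\Omega)$ function (difficulty (v) in the introduction). The key move is to transfer the time derivative off $u'$ by integration by parts in time:
\[
\intt{\intgg{u'\cdot z}}=\left[\intgg{u\cdot z}\right]_0^T-\intt{\intgg{u\cdot z'}}.
\]
The boundary-in-time term is controlled by $C E(0)$ via the Poincar\'e-type trace inequality \eqref{poincar} applied to $u\in\mathbb{V}$ and the estimate $\|u\|_\mathbb{V}^2\le 2 E(t)$. For the remaining integral, Young's inequality with parameter $\tau$ yields
\[
\intt{\intgg{u\cdot z'}}\le \tau\,\intt{\intgg{|u|^2}}+\frac{1}{4\tau}\intt{\intgg{|z'|^2}}\le C\tau\,\intt{E(t)}+\frac{1}{4\tau g_0}E(0),
\]
where the first estimate again uses \eqref{poincar} and the second uses $g\ge g_0$ with the dissipation identity. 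Collecting all bounds, using $h\ge h_0$ to absorb $|z|^2$ into $h_0^{-1} h|z|^2$ within the target quantity on the LHS, and gathering all constants into $\widehat C(\tau)$, delivers \eqref{estima_nabla_T_delta_nu}.
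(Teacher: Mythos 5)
Your argument is correct, but it follows a genuinely different route from the paper's. The paper's (sketched) proof invokes the strategy of Lemma~8.12 in Komornik's book: one constructs an auxiliary elliptic problem on the boundary and uses its solution as a multiplier to absorb the lower-order terms. You instead multiply the boundary equations directly by $z_T$ and $z_\nu$; this works precisely because the boundary dynamics already carry a coercive zeroth-order term $h(x)z$ with $h\ge h_0>0$ (so $\int_{\Gamma_1}|z|^2\,dS$ is recovered from the identity itself rather than from an auxiliary compactness or elliptic argument) and a dissipative term $g(x)z'$ with $g\ge g_0>0$ (so all the $|z'|^2$ contributions are controlled by the dissipation identity \eqref{egalite_energy_decreas}). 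Your reconstruction of $\sigma_T^0(z)\odot\varepsilon_T^0(z)$ from the Stokes formula applied to $\operatorname{div}_T\sigma_T^0(z)\cdot z_T$ plus the cross term $(\sigma_T^0(z)\odot\partial_T\nu)z_\nu$ is exactly the computation the paper itself performs in Lemma~\ref{Lemma_monotonie}, so that step is consistent with the paper's conventions. The one genuinely delicate point — the interaction term $\int_0^T\int_{\Gamma_1}u'\cdot z$, whose direct estimate would require an uncontrolled trace of $u'$ (difficulty (v) of the introduction) — you handle correctly by integrating by parts in time and then using the trace inequality \eqref{poincar} on $u$ together with Young's inequality with parameter $\tau$, which is what produces the $\tau^{-1}\widehat{C}E(0)+\tau\int_0^T E(t)\,dt$ structure of the claim. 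What your approach buys is a shorter, self-contained and more elementary proof; what the paper's elliptic-multiplier approach buys is robustness in situations where the zeroth-order boundary coefficient could degenerate ($h_0=0$), which is not needed here given Assumption \eqref{cond_f_g_h_ge_f0_g0_h0}.
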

\begin{proof}
The proof follows the strategy in Lemma~8.12, \cite{Komornik1994}. The idea relies on the construction of an elliptic problem and to use suitably designed multiplier associated to this auxiliary problem. Through some computations, we obtain estimations of the desired terms. This approach is widely used in the literature, the reader is referred to the works  \cite{Komornik1994,Martinez1999} for further details.
\end{proof}
\subsection{The multiplier identity}
For simplicity, we will work with regular solutions of $\eqref{syst_PDE}$, and through standard density arguments, we can also extend our results to weak solutions.
\begin{lemma}\label{lemme_identit_fondam_final_1}
	For every strong solution $(u, z)$ of $\eqref{syst_PDE}$ we have the following identity
	\begin{align}\label{egalite_fond_GLOB}
		&2\intt{E(t)}       =     -2\left[\intgo{u^\prime\left((x-x_0):\nabla u+ u\right)}\right]_0^T -2\intt{\intgo{a(x)u^\prime\left((x-x_0)\cdot\nabla u+u\right)}}\nonumber\\
		&{+2\intt{\intgg{ z^\prime\left((x-x_0):\nabla u+u\right)}}} +{\intt{\intgg{((x-x_0)\cdot\nu)\left(|u^\prime|^2-\sigmau\odot\epsilonu\right)}}} \nonumber\\
		&-2\left[\intgg{\ft\deltapt(\pi\partial_T z_T\pi)(x-x_0)_T}\right]_0^T-2\left[\intgg{\ft\deltapt( z_\nu\partial_T\nu)(x-x_0)_T}\right]_0^T\nonumber\\
		&-2\left[\intgg{f_\nu(x)\deltapnu\partial_T z_\nu\ (x-x_0)_T}\right]_0^T +2\left[\intgg{f_\nu(x)\deltapnu(z^t_T\partial_T\nu)\ (x-x_0)_T}\right]_0^T\nonumber\\
		& -\intt{\intgg{\ft\div_T(x-x_0)_T |\deltapt|^2}} +2\intt{\intgg{\ft\deltapt(\deltapnu\partial_T\nu)(x-x_0)_T}}\nonumber\\
		&-2\intt{\intgg{\sigmazd\odot(\pi\partial_T z_T\pi+ z_\nu\partial_T\nu)\pi\partial_T(x-x_0)_T\pi}} -2\intt{\intgg{\partial_T z_\nu\left(\sigmazd(\partial_T\nu)(x-x_0)_T\right)}}\nonumber\\
		&+\intt{\intgg{\left(\sigmazd\odot\epsilonzd\right)\div_T(x-x_0)_T}}+ \intt{\intgg{|\nabla_T z_\nu|^2\div_T(x-x_0)_T}}\nonumber\\
		&-2\intt{\intgg{\left(\left( \Gamma_{\tau\kappa,\rho}^\mu-\Gamma_{\rho\kappa,\tau}^\mu+\Gamma_{\rho\varrho}^\mu\Gamma_{\tau\kappa}^\varrho-\Gamma_{\tau\varrho}^\mu\Gamma_{\rho\kappa}^\varrho\right)\odot\sigmazd\right) :\left( z_T\otimes (x-x_0)_T\right)}}\nonumber\\
		&+2\intt{\intgg{(\sigmazd\odot\partial_T\nu)( z^t_T\partial_T\nu)(x-x_0)_T}}+\intt{\intgg{h(x)| z^2|\div_T(x-x_0)_T}} \nonumber\\
		&-2\intt{\intgg{\gt\deltapt\left(\pi\partial_T z_T+ z_\nu\partial_T\nu\right)}} -2\intt{\intgg{u^\prime_T\left(\pi\partial_T z_T\pi+ z_\nu\partial_T\nu\right)(x-x_0)_T}}\nonumber\\ &-2\intt{\intgg{\rt z_T( z_\nu\partial_T\nu)(x-x_0)_T}}+2\intt{\intgg{\hnu z_\nu( z^t_T\partial_T\nu)(x-x_0)_T}}\nonumber\\
		& - \intt{\intgg{f_\nu(x)|\deltapnu|^2\div_T\ (x-x_0)_T}} -2\intt{\intgg{f_\nu(x)\deltapnu(z^t_T\partial_T\nu)(x-x_0)_T}} \nonumber\\
		&-2\intt{\intgg{u^\prime_\nu(\partial_T z_\nu- z^t_T\partial_T\nu)(x-x_0)_T}}-2\intt{\intgg{\gnu z^\prime_\nu(\partial_T z_\nu- (z_T)^t\partial_T\nu)(x-x_0)_T}}\nonumber\\
		& + \intt{\intgg{f(x)| z^\prime|^2}} +\intt{\intgg{h(x)| z|^2}} + \intt{\intgg{\sigmazd\odot\epsilonzd}}\nonumber\\ &-2\intt{\intgg{\nabla_T z_\nu^t\pi\partial_T(x-x_0)_T\pi\nabla_T z_\nu}}-2\intt{\intgg{\left(\nabla_T z_\nu\right)^t\nabla_T\cdot\left[z_T^t(\partial_T\nu)(x-x_0)_T\right]}} ,
	\end{align}	 
	wherein $\Gamma_{\lambda\eta}^\theta$ are the Christoffel symbols and $1\leq\kappa,\rho,\varrho,\mu\leq2$ (see for instance \cite{Hemina1999, KasriHemina2016}) .
\end{lemma}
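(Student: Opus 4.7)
The plan is to derive \eqref{egalite_fond_GLOB} by applying radial-type multipliers to each of the evolution equations in \eqref{syst_PDE} and then summing the resulting identities. For the interior equation I would take the classical Lions multiplier $M(u):=(x-x_0)\cdot\nabla u+u$ and compute
\begin{equation*}
2\intt{\intgo{\bigl[u''-\div\sigma(u)+a(x)u'\bigr]\cdot M(u)}}=0.
\end{equation*}
The $u''$ contribution, after one integration by parts in time and one in space, yields $-2\bigl[\intgo{u'M(u)}\bigr]_0^T+\intt{\intgo{|u'|^2}}+\intt{\intgg{((x-x_0)\cdot\nu)|u'|^2}}$. The divergence term, after Green's formula, produces the boundary trace $2\intt{\intgg{(\sigma(u)\cdot\nu)\cdot M(u)}}$ and the bulk piece $-2\intt{\intgo{\sigma(u)\odot\epsilon(M(u))}}$; expanding $\epsilon(M(u))$ and applying a second Green's identity converts the bulk piece into $\intt{\intgo{\sigma(u)\odot\epsilon(u)}}-\intt{\intgg{((x-x_0)\cdot\nu)\,\sigma(u)\odot\epsilon(u)}}$. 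The two bulk integrals then combine into $2\intt{E_\Omega(t)}$, while the impermeability condition $\sigma(u)\cdot\nu=z'$ turns the boundary trace into $2\intt{\intgg{z'\cdot M(u)}}$, exactly as in the third line of \eqref{egalite_fond_GLOB}.

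For the two dynamic boundary equations I would use their tangential analogues. I would test $(\ref{syst_PDE})_3$ against $2[\pi\partial_T z_T\pi+z_\nu\partial_T\nu](x-x_0)_T$ and $(\ref{syst_PDE})_4$ against $2[\partial_T z_\nu-z_T^t\partial_T\nu](x-x_0)_T$; together these are the tangential counterparts of $(x-x_0)\cdot\nabla$ acting on the decomposition $z=z_T+z_\nu\nu$. The $f_T z_T''$ and $f_\nu z_\nu''$ pieces, integrated by parts in time, deliver the four bracketed $[\,\cdot\,]_0^T$ boundary terms in lines 5--7 of \eqref{egalite_fond_GLOB} together with the $\div_T(x-x_0)_T|z'_T|^2$ and $f_\nu|z'_\nu|^2\div_T(x-x_0)_T$ contributions. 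The $-\div_T\sigma_T^0(z)$ term, integrated by parts via \eqref{stokes_formula}, produces the $\sigma_T^0(z)\odot\epsilon_T^0(\cdot)$ blocks; after commuting $\partial_T$ past the two arguments of $\sigma_T^0$, the Christoffel-symbol line emerges. Similarly, $-\Delta_T z_\nu$ tested against $\partial_T z_\nu-z_T^t\partial_T\nu$ yields, after Stokes, the $|\nabla_T z_\nu|^2\div_T(x-x_0)_T$ piece together with the coupling $(\nabla_T z_\nu)^t\nabla_T\cdot[z_T^t(\partial_T\nu)(x-x_0)_T]$. The damping terms $g_T z'_T$, $g_\nu z'_\nu$ and the lower order couplings with $u'_T$, $u'_\nu$, $h(x)z$, $r_T(x)z_T$, $r_\nu(x)z_\nu$ each generate one of the remaining lines in a straightforward manner.

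The hard part will be the geometric bookkeeping on the curved surface $\Gamma_1$. The curvature operator $\partial_T\nu$ couples tangential and normal components through \eqref{defini_varepsilon0_T}, so tangential differentiation of $(x-x_0)_T$ or of the multipliers inevitably generates cross terms between $z_T$ and $z_\nu$; the identity $\partial_T(x-x_0)_T=\pi-(\nu\cdot(x-x_0))\partial_T\nu$ must be tracked consistently throughout. Equally delicate is the fact that on a non-flat surface the commutator of $\partial_T$ with the tensorial argument of $\sigma_T^0$ does not vanish; it is precisely this commutator that accounts for the combination $\Gamma^\mu_{\tau\kappa,\rho}-\Gamma^\mu_{\rho\kappa,\tau}+\Gamma^\mu_{\rho\varrho}\Gamma^\varrho_{\tau\kappa}-\Gamma^\mu_{\tau\varrho}\Gamma^\varrho_{\rho\kappa}$ (essentially a component of the Riemann curvature of $\Gamma_1$) appearing in the identity. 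I would lean on the tangential calculus developed in \cite{Hemina1999,KasriHemina2016} to rewrite these commutators in closed form rather than redo local chart computations.

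To close, I would collect the $|u'|^2$, $\sigma(u)\odot\epsilon(u)$, $f(x)|z'|^2$, $h(x)|z|^2$, $\sigma_T^0(z)\odot\epsilon_T^0(z)$ and $|\nabla_T z_\nu|^2$ contributions with the correct signs, which reproduces $2\intt{E(t)}$ on the left via the definition \eqref{egalit_energy}; every remaining contribution appears on the right exactly as listed. The identity is first established for strong solutions $U\in\mathcal{D}(\mathbb{A})$, then extended to weak solutions by the density argument mentioned just before the lemma.
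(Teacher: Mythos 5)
Your proposal follows essentially the same route as the paper: the radial multiplier $(x-x_0)\cdot\nabla u+u$ for the interior equation, the tangential multipliers $(\pi\partial_T z_T\pi+z_\nu\partial_T\nu)(x-x_0)_T$ and $(\partial_T z_\nu-z_T^t\partial_T\nu)(x-x_0)_T$ for the two acoustic boundary equations, Stokes' formula \eqref{stokes_formula} producing the curvature/Christoffel terms, and the impermeability condition converting the trace term into $2\intt{\intgg{z'\cdot M(u)}}$. The only organizational difference is that the paper first derives the identities for a general vector field $q$ and weight $\psi$ (reused later in the Appendix with $q=k$, $\psi=\xi_\varepsilon$) and specializes to $q=x-x_0$, $\psi=1$ at the end, whereas you substitute these choices from the outset; a few intermediate signs in your sketch are off, but the structure and the final collection into $2\intt{E(t)}$ are correct.
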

\begin{proof}
	Multiplying both sides of the first equation in $\eqref{syst_PDE}$ by $2(q\odot\nabla u)$, integrating by parts over $\Omega\times [0;T]$ yields to
	\begin{align}\label{egalite_fond_equat_1}
		&-\left[2\intgo{u^\prime(q\odot\nabla u)}\right]_0^T +\intt{\intgo{\div q\left(\sigmau\odot\epsilonu-|u^\prime|^2\right)}} -2\intt{\intgo{\sigmau\odot\nabla q\cdot \nabla u}}\nonumber\\
		&\qquad -2\intt{\intgo{(a(x)u^\prime)(q\odot\nabla u)}} + \intt{\intgg{(q\cdot\nu)\left(|u^\prime|^2-\sigmau\odot\epsilonu\right)}}\nonumber\\
		&\qquad+2\intt{\intgg{ z^\prime(q\odot\nabla u)}}=0.
	\end{align}
	Next, we multiply the first acoustic boundary condition by $-2(\pi\partial_T z_T\pi+ z_\nu\partial_T\nu)q_T$, integrating over $\Gamma\times [0,T]$, we get	
	\begin{align}
		&-2\intt{\intgg{f_T(x) z_T^{\prime\prime}(\pi\partial_T z_T\pi+ z_\nu\partial_T\nu)q_T}}    +2\intt{\intgg{\div_T(\sigmazd)(\pi\partial_T z_T\pi+ z_\nu\partial_T\nu)q_T}}\nonumber\\
		&-2\intt{\intgg{g_T(x)  z_T^{\prime}(\pi\partial_T z_T\pi+ z_\nu\partial_T\nu)q_T}}   -2\intt{\intgg{r_T(x)  z_T(\pi\partial_T z_T\pi+ z_\nu\partial_T\nu)q_T}}\nonumber\\
		&-2\intt{\intgg{u_T^{\prime}(\pi\partial_T z_T\pi+ z_\nu\partial_T\nu)q_T }}= 0.
	\end{align}
	By applying integration by parts and utilizing Stokes formula \eqref{stokes_formula},
	we evaluate the resulting term $\sigmazd\odot\pi\partial_T\left[(\pi\partial_T z_T\pi+ z_\nu\partial_T\nu)q_T)\right]\pi$ as presented in \cite{Hemina1999}, which leads to	
	\begin{align}\label{egalite_fond_cond_bord_1}
		&-2\left[\intgg{\ft\deltapt(\pi\partial_T z_T\pi)q_T}\right]_0^T -2\left[\intgg{\ft\deltapt( z_\nu\partial_T\nu)q_T}\right]_0^T\nonumber\\
		&-\intt{\intgg{\ft\div_Tq_T|\deltapt|^2}} + \intt{\intgg{\ft\deltapt( z^\prime_\nu\partial_T\nu)q_T}}\nonumber\\
		&-2\intt{\intgg{\sigmazd\odot(\pi\partial_T z_T\pi+ z_\nu\partial_T\nu)\pi\partial_Tq_T\pi}} -2\intt{\intgg{\partial_T z_\nu\ \sigmazd(\partial_T\nu)q_T}}\nonumber\\
		&+\intt{\intgg{(\sigmazd\odot\epsilonzd)\div_Tq_T}} + 2\intt{\intgg{(\sigmazd\odot\partial_T\nu)(\partial_T z^\prime_\nu)q_T}} \nonumber\\
		&-2\intt{\intgg{\left(\left( \Gamma_{\tau\kappa,\rho}^\mu-\Gamma_{\rho\kappa,\tau}^\mu+\Gamma_{\rho\varrho}^\mu\Gamma_{\tau\kappa}^\varrho-\Gamma_{\tau\varrho}^\mu\Gamma_{\rho\kappa}^\varrho\right)\odot\sigmazd\right)\odot\left( z_T \otimes q_T\right)}} \nonumber\\ 
		&-2\intt{\intgg{\gt\deltapt(\pi\partial_T z_T\pi+ z_\nu\partial_T\nu)q_T}}      -2\intt{\intgg{u^\prime_T(\pi\partial_T z_T\pi+ z_\nu\partial_T\nu)q_T}}\nonumber\\
		&+\intt{\intgg{\rt\div_Tq_T| z_T|^2}} -2\intt{\intgg{\rt z_T( z_\nu(\partial_T\nu))q_T}}.
	\end{align}
	Therefore, we multiply the second acoustic boundary condition by $-2(\partial_T z_\nu - z^t_T\partial_T\nu)q_T$, perform series of integration by parts and apply \eqref{stokes_formula}, we obtain 
	\begin{align}\label{egalite_fond_cond_bord_2} 
		&-2\left[\intgg{\fnu\deltapnu\partial_T z_\nu q_T}\right]_0^T + 2\left[\intgg{\fnu\deltapnu( z^t_T\partial_T\nu)q_T}\right]_0^T\nonumber\\
		& -\intt{\intgg{\fnu| z_\nu^\prime|^2\div_Tq_T}} -2\intt{\intgg{\fnu\deltapnu(\overline{ z^\prime_T}\partial_T\nu)q_T}}\nonumber\\
		&-2\intt{\intgg{(\sigmazd\odot\partial_T\nu)(\partial_T z_\nu- z^t_T\partial_T\nu)q_T}} -2\intt{\intgg{\gnu z^\prime_\nu(\partial_T z_\nu-\overline{ z_T}\partial_T\nu)q_T}}\nonumber\\
		&+2\intt{\intgg{\hnu z_\nu(z_T^t\partial_T\nu)q_T}} + \intt{\intgg{\hnu| z_\nu|^2\div_Tq_T}} \nonumber\\
		&-2\intt{\intgg{u^\prime_\nu(\partial_T z_\nu-\overline{ z_T}\partial_T\nu)q_T}} +2\intt{\intgg{\nabla_T z_\nu\pi\partial_Tq_T\pi\nabla_T z_\nu}} \nonumber\\
		&+ \intt{\intgg{|\nabla_T z_\nu|^2\div_Tq_T}}  -2\intt{\intgg{(\nabla_T z_\nu)^t\nabla_T\left( z_T^t(\partial_T\nu)q_T\right)}}=0.
	\end{align}
	Combining equalities \eqref{egalite_fond_equat_1}, \eqref{egalite_fond_cond_bord_1} and \eqref{egalite_fond_cond_bord_2} we infer	
	\begin{align}\label{egalite_fond_I}
		&-\left[2\intgo{u^\prime(q\odot\nabla u)}\right]_0^T +\intt{\intgo{\div q\left(\sigmau\odot\epsilonu-|u^\prime|^2\right)}} -2\intt{\intgo{\sigmau\odot\nabla q\cdot \nabla u}}\nonumber\\
		& -2\intt{\intgo{(a(x)u^\prime)(q\odot\nabla u)}} + \intt{\intgg{(q\cdot\nu)\left(|u^\prime|^2-\sigmau\odot\epsilonu\right)}}
		+2\intt{\intgg{ z^\prime(q\odot\nabla u)}}\nonumber\\
		&-2\left[\intgg{\ft\deltapt(\pi\partial_T z_T\pi)q_T}\right]_0^T -2\left[\intgg{\ft\deltapt( z_\nu\partial_T\nu)q_T}\right]_0^T\nonumber\\
		&-\intt{\intgg{\ft\div_Tq_T|\deltapt|^2}} + \intt{\intgg{\ft\deltapt( z^\prime_\nu\partial_T\nu)q_T}}\nonumber\\
		&-2\intt{\intgg{\sigmazd\odot(\pi\partial_T z_T\pi+ z_\nu\partial_T\nu)\pi\partial_Tq_T\pi}} -2\intt{\intgg{\partial_T z_\nu\ \sigmazd(\partial_T\nu)q_T}}\nonumber\\
		&+\intt{\intgg{(\sigmazd\odot\epsilonzd)\div_Tq_T}} + 2\intt{\intgg{(\sigmazd\odot\partial_T\nu)(\partial_T z^\prime_\nu)q_T}}\nonumber\\ &-2\intt{\intgg{\left(\left( \Gamma_{\tau\kappa,\rho}^\mu-\Gamma_{\rho\kappa,\tau}^\mu+\Gamma_{\rho\varrho}^\mu\Gamma_{\tau\kappa}^\varrho-\Gamma_{\tau\varrho}^\mu\Gamma_{\rho\kappa}^\varrho\right)\odot\sigmazd\right)\odot\left( z_T \otimes q_T\right)}}\nonumber\\
		& -2\intt{\intgg{u^\prime_T(\pi\partial_T z_T\pi+ z_\nu\partial_T\nu)q_T}} -2\intt{\intgg{\gt\deltapt(\pi\partial_T z_T\pi+ z_\nu\partial_T\nu)q_T}}\nonumber\\
		&+\intt{\intgg{h(x)\div_Tq_T| z|^2}} -2\intt{\intgg{\rt z_T( z_\nu(\partial_T\nu))q_T}}\nonumber\\
		&+2\intt{\intgg{\hnu z_\nu( z_T^t\partial_T\nu)q_T}} -2\left[\intgg{\fnu\deltapnu\partial_T z_\nu q_T}\right]_0^T \nonumber\\
		&+ 2\left[\intgg{\fnu\deltapnu( z^t_T\partial_T\nu)q_T}\right]_0^T -\intt{\intgg{\fnu| z^\prime|^2\div_Tq_T}} \nonumber\\
		&-2\intt{\intgg{\fnu\deltapnu( (z^\prime_T)^t\partial_T\nu)q_T}} -2\intt{\intgg{(\sigmazd\odot\partial_T\nu)(\partial_T z_\nu- z_T^t\partial_T\nu)q_T}} \nonumber\\
		&-2\intt{\intgg{\gnu z^\prime_\nu(\partial_T z_\nu- z_T^t\partial_T\nu)q_T}} -2\intt{\intgg{u^\prime_\nu(\partial_T z_\nu-z_T^t\partial_T\nu)q_T}}
		\nonumber\\
		&+2\intt{\intgg{\nabla_T z_\nu\pi\partial_Tq_T\pi\nabla_T z_\nu}} +\intt{\intgg{|\nabla_T z_\nu|^2\div_Tq_T}}\nonumber\\ 
		&-2\intt{\intgg{(\nabla_T z_\nu)^t\nabla_T\left(z_T^t(\partial_T\nu)q_T\right)}}=0.
	\end{align}
	\noindent On the other hand, multiplying the first equation of $\eqref{syst_PDE}$ by $2\psi u$ where $\psi\in C^1(\overline{\Omega})$ and using the same arguments as in \eqref{egalite_fond_I} we obtain
	\begin{align}\label{egalite_fond_II}
		&2\left[\intgo{u^\prime\psi u}\right]_0^T -2\intt{\intgo{\psi|u^\prime|^2}} -2\intt{\intgg{ z^\prime\psi u}}\nonumber \\
		&+2\intt{\intgo{\sigmau\odot\epsilonu\ \psi}}+2\intt{\intgo{\sigmau\left((\nabla\psi)u\right)}}  +2\intt{\intgo{a(x)u^\prime\psi u}}.
	\end{align} 
	\noindent Taking $q=x-x_0$ and $\psi= 1$ as a choice in \eqref{egalite_fond_I} and \eqref{egalite_fond_II} respectively and using the definition of the energy \eqref{egalit_energy} leads us to the desired fundamental identity \eqref{egalite_fond_GLOB} of Lemma~\ref{lemme_identit_fondam_final_1}.
\end{proof}

\subsection{Energy Estimates}
\noindent In this subsection, we will estimate the terms on the right-hand side of $\eqref{egalite_fond_GLOB}$. We will use $c_i, C_i$ through this subsection, to denote generic positive constants. Note also that through all this part, we consider that $\displaystyle R=\sup_{x\in\Omega} |x-x_0|$, and that $(x-x_0)_T$, $\div_{T}(x-x_0)$, $(x-x_0)\cdot\nu$ and $\partial_T \nu$ are  bounded by a generic constant. For simplification purpose, this constant is automatically included into the constants $c_i$ or $C_i$, depending on the context in each integral estimation. 
\noindent We estimate our first term by applying Young's inequality for $\varepsilon>0$ and using \eqref{cond_def_a_damping}together with Lemmas~\ref{Lemma_estima_q_nabla_u_u} and \ref{lemma_energ_decrea}, which yields to
\begin{align*}
	\left|-2\intgo{u^\prime\left((x-x_0)\odot\nabla u+ u\right)}\right|&\le C_\varepsilon\intgo{|u^\prime|^2} +\varepsilon \intgo{\left[(x-x_0)\odot\nabla u+ u\right]^2},\nonumber\\
	&\le \left(\tilde{c}_1 +\varepsilon\xi\right) E(t),
\end{align*} 
then 
\begin{equation}\label{estimat_integ_1}
	\left|-2\left[\intgo{u^\prime\left((x-x_0)\odot\nabla u+ u\right)}\right]_0^T\right|\le (c_1+\varepsilon) E(0).
\end{equation}
\noindent By an analogous reasoning we obtain 
\begin{align}\label{estimat_integ_2}
	\left|-2\intt{\intgo{a(x)u^\prime\left((x-x_0)\cdot\nabla u+u\right)}}\right|\le c_2 E(0)+ {\varepsilon C_1\intt{E(t)}}.
\end{align}
\noindent On the other hand, since $(x-x_0)\odot\nabla u=(x-x_0)_T\odot\nabla_Tu +(x-x_0)_\nu\frac{\partial u}{\partial\nu}\cdot\nu$ on  $\Gamma$, we can estimate the term $2\intt{\intgg{\deltap\left((x-x_0)\odot\nabla u+u\right)}}$ accordingly
\begin{align*}
	2\intt{\intgg{\deltap\left((x-x_0)\odot\nabla u+u\right)}}&=  2\intt{\intgg{\deltap\left((x-x_0)\odot\nabla u\right)}} +   2\intt{\intgg{\deltap u}},\\
	&= 2\intt{\intgg{\deltap\left((x-x_0)_T\nabla_Tu +(x-x_0)_\nu\partial_\nu u\cdot\nu\right)}}\\ &\quad+  2\intt{\intgg{\deltap u}},
\end{align*}
applying Young's inequality alongside relations \eqref{poincar}, \eqref{egalit_energy} and \eqref{egalite_energy_decreas}, we infer  
\begin{align}\label{estimat_integ_3}
	2\intt{\intgg{\deltap\left((x-x_0)\odot\nabla u+u\right)}}&\le 3C_\varepsilon\intt{\intgg{| z^\prime|^2}} +\varepsilon\intt{\intgg{\sigmazu\odot\epsilonzu}}\nonumber\\
	&\quad +\varepsilon\intt{\intgg{|\partial_\nu u|^2}} + (\mathcal{C}_p^2\varepsilon)\intt{\intgo{\sigmau\odot\epsilonu}},\nonumber\\
	&\qquad\le c_3 E(0) +\varepsilon\intt{\intgg{\left[\sigmazu\odot\epsilonzu +|\partial_\nu u|^2\right]}}\nonumber\\
	&\quad \qquad+\varepsilon C_2\intt{E(t)}.   
\end{align}
\noindent Next, the term below is estimated through Young's inequality along with Lemma~ \ref{Lemme_equiva_norm} resulting in
\begin{align} \label{estimat_integ_4}
	&\left|-2\left[\intgg{\ft\deltapt(\pi\partial_T z_T\pi)(x-x_0)_T}\right]_0^T -2\left[\intgg{\ft\deltapt( z_\nu\partial_T\nu)(x-x_0)_T}\right]_0^T\right|\nonumber\\
	&\qquad\qquad\qquad\le c^*_4C_\varepsilon \left[\intgg{f(x)| z^\prime|^2}\right]_0^T + \varepsilon\left[\intgg{| z|^2} +\intgg{\sigmazd\odot\epsilonzd}\right]_0^T,\nonumber\\
	&\qquad\qquad\qquad\le c_4 E(0) +\varepsilon\left[\intgg{| z|^2} +\intgg{\sigmazd\odot\epsilonzd}\right]_0^T.
\end{align}
\noindent Moreover, by directly applying Young's inequality and using the energy decay identity \eqref{egalite_energy_decreas}, together with the boundedness of both $\div_T(x-x_0)_T$ and $\partial_T\nu$, we get
\begin{align}\label{estimat_integ_5}
	&\left|-\intt{\intgg{\ft\div_T(x-x_0)_T |\deltapt|^2}} +2\intt{\intgg{\ft\deltapt(\deltapnu\partial_T\nu)(x-x_0)_T}}\right|\nonumber\\
	&\hspace{3.cm}\le\frac{R\|f\|_{L^\infty}(1 +C_\varepsilon)}{g_0} \intt{\intgg{g(x)| z^\prime|^2}}+ \varepsilon \intt{\intgg{| z^\prime|^2}},\nonumber\\
	&\hspace{3.cm}\le c_5 E(0) + \varepsilon \intt{\intgg{| z^\prime|^2}}.
\end{align}
\noindent On the other hand, using the fact that $(x-x_0)_T$ and $\partial_T\nu$ are bounded, relations \eqref{defini_sigma0_T}, \eqref{defini_varepsilon0_T} and Lemma~\ref{Lemme_equiva_norm}, we obtain the following estimates  
\begin{equation}\label{estimat_integ_6}
	\left|-2\intt{\intgg{\sigmazd\odot(\pi\partial_T z_T\pi+ z_\nu\partial_T\nu)\pi\partial_T(x-x_0)_T\pi}}\right|\le C_4(\alpha, \lambda, R,\varepsilon)\intt{\intgg{\sigmazd\odot\epsilonzd}},
\end{equation}
\begin{align}
	\left|-2\intt{\intgg{\partial_T z_\nu\left(\sigmazd(\partial_T\nu)(x-x_0)_T\right)}}\right|&\le   C_5C_\varepsilon\intt{\intgg{\left(\sigmazd\odot\epsilonzd+| z|^2\right)}}\nonumber\\ &\hspace{1.cm} +\varepsilon\intt{\intgg{|\nabla_T z_\nu|^2}},\nonumber\\
	&\hspace{0.cm} \le C_5C_\varepsilon\intt{\intgg{\left(\sigmazd\odot\epsilonzd+| z|^2\right)}}\nonumber\\
	&\hspace{1.cm} + {\varepsilon C_6\intt{E(t)}}.\label{estimat_integ_7}
\end{align}
\begin{align}
	&\left|\intt{\intgg{\left(\sigmazd\odot\epsilonzd\right)\div_T(x-x_0)_T}}\right|+2\left|\intt{\intgg{(\sigmazd\odot\partial_T\nu)(z^t_T\partial_T\nu)(x-x_0)_T}}\right| \nonumber\\ 
	&+2\left|\intt{\intgg{\left(\left( \Gamma_{\tau\kappa,\rho}^\mu-\Gamma_{\rho\kappa,\tau}^\mu+\Gamma_{\rho\varrho}^\mu\Gamma_{\tau\kappa}^\varrho-\Gamma_{\tau\varrho}^\mu\Gamma_{\rho\kappa}^\varrho\right)\odot\sigmazd\right) \odot\left( z_T\otimes (x-x_0)_T\right)}}\right|\nonumber\\
	&+\left|\intt{\intgg{\sigmazd\odot\epsilonzd}}\right|\le C^*_7(R+ \varepsilon)\intt{\intgg{\sigmazd\odot\epsilonzd}} +C_\varepsilon\intt{\intgg{| z|^2}},\nonumber\\
	&\hspace{4.9cm}\le C_7\intt{\intgg{\left[\sigmazd\odot\epsilonzd +| z|^2\right]}}.\label{estimat_integ_8}
\end{align}
\noindent Thanks to assumption \eqref{cond_f_g_h_ge_f0_g0_h0}, the boundedness of $\partial_T\nu$ and same reasoning as above, we deduce
\begin{align}
	\left|-2\intt{\intgg{\gt\deltapt(\pi\partial_T z_T\pi+ z_\nu\partial_T\nu)(x-x_0)_T}}\right|&\le \varepsilon\intt{\intgg{\left(| z|^2 +\sigmazd\odot\epsilonzd\right)}}\nonumber\\ &\hspace{1.cm}+\frac{C_\varepsilon}{g_0}\intt{\intgg{g(x)| z^\prime|^2}},\nonumber \\
	&\le \varepsilon\intt{\intgg{| z|^2 +\sigmazd\odot\epsilonzd}}\nonumber\\
	&\qquad+c_6 E(0) .\label{estimat_integ_9}
\end{align}
\begin{align}\label{estimat_integ_10}
	&\left|\intt{\intgg{\rt\div_T(x-x_0)_T| z_T|^2}}+ \intt{\intgg{\hnu| z_\nu|^2\div_T(x-x_0)_T}}\right|\nonumber\\
	&\hspace{5.cm}\le C_9(\varepsilon +C_\varepsilon)\intt{\intgg{| z|^2}}.	
\end{align}
\begin{equation}\label{estimat_integ_11}
	\begin{split}
		&\left|-2\intt{\intgg{\rt z_T( z_\nu(\partial_T\nu))(x-x_0)_T}} +2\intt{\intgg{\hnu z_\nu( z_T^t\partial_T\nu)(x-x_0)_T}}\right|\nonumber\\
		&\hspace{+8.cm}\le C_{10}(\varepsilon +C_\varepsilon)\intt{\intgg{| z|^2}}.
	\end{split}
\end{equation}
\begin{align}
	&\left|-2\intt{\intgg{u^\prime_T\left(\pi\partial_T z_T\pi+ z_\nu\partial_T\nu\right)(x-x_0)_T}}\right|\le\left|-2\intt{\intgg{u^\prime_T\left(\pi\partial_T z_T\pi\right)(x-x_0)_T}}\right|\nonumber\\ 
	&\hspace{+8cm}+\left|2\intt{\intgg{u^\prime_T\left( z_\nu\partial_T\nu\right)(x-x_0)_T}}\right|,\nonumber\\
	&\hspace{3cm}\le  C_{11}C_\varepsilon\intt{\intgg{\left[| z|^2+ \sigmazd\odot\epsilonzd\right]}}+ {\varepsilon\intt{\intgg{|u^\prime|^2}}}.\label{estimat_integ_12}
\end{align}
Through Young's inequality and the boundedness of $f_\nu$, $(x-x_0)_T$ and $\partial_T\nu$, combined with Lemma~\ref{lemma_energ_decrea} we get
\begin{align}
	\left|-2\left[\intgg{\fnu\deltapnu\partial_T z_\nu (x-x_0)_T}\right]_0^T\right|\le& \left[\varepsilon c^*_{7}\intgg{|\nabla_T z_\nu|^2} +C_\varepsilon\|f\|_{L^\infty} R\intgg{f(x)|\deltap|^2}\right]_0^T,\nonumber\\
	\le & c_{7}E(0)  +\varepsilon \left[c^*_{7}\intgg{|\nabla_T z_\nu|^2}\right]_0^T, \label{estimat_integ_13}
\end{align} 
with $c_7=c_7(\varepsilon,R, \|f\|_{L^\infty})$.
\begin{align}
	+2\left[\intgg{f_\nu(x)\deltapnu(z^t_T\partial_T\nu)\ (x-x_0)_T}\right]_0^T\le & C_\varepsilon h_0^{-1}R\left[\intgg{h(x)| z|^2}\right]_0^T +\tilde{c}_8\varepsilon\left[\intgg{| z^\prime|^2}\right]_0^T,\nonumber\\
	\le&  c_8 E(0) + \tilde{c}_8\varepsilon\left[\intgg{| z^\prime|^2}\right]_0^T.\label{estimat_integ_14}
\end{align} 
\begin{align}	
	&\left|- \intt{\intgg{f_\nu(x)|\deltapnu|^2\div_T\ (x-x_0)_T}}-2\intt{\intgg{f_\nu(x)\deltapnu(z^t_T\partial_T\nu)(x-x_0)_t}}\right|\nonumber\\
	&\hspace{2.cm}\le g_0^{-1}\|f\|_{L^\infty} R(1+C_\varepsilon\|\partial_T\nu\|_{L^\infty}) \intt{\intgg{g(x)|\deltap|^2}} +\varepsilon c^*_{9}\intt{\intgg{| z|^2}},\nonumber\\
	&\hspace{2.cm}\le c_{9}E(0) + \varepsilon c^*_{9}\intt{\intgg{| z|^2}}.\label{estimat_integ_15}
\end{align} 
\noindent Following analogous reasoning, the next terms are managed as follows  
\begin{align}
	&\left|-2\intt{\intgg{u^\prime_\nu(\partial_T z_\nu- z^t_T\partial_T\nu)(x-x_0)_T}}\right|\le  R C_\varepsilon\intt{\intgg{\left(|\nabla_T z_\nu|^2 +h(x)| z|^2\right)}}\nonumber\\   &\hspace{8.cm}+2\varepsilon\intt{\intgg{|u^\prime|^2}},  \nonumber\\
	&\hspace{4cm}\le C_{12}\intt{\intgg{\left(|\nabla_T z_\nu|^2+ h(x)| z|^2\right)}}+ {2\varepsilon\intt{\intgg{|u^\prime|^2}}}.\label{estimat_integ_16}
\end{align}
\begin{align}
	&\left|-2\intt{\intgg{\gnu z^\prime_\nu(\partial_T z_\nu- (z_T)^t\partial_T\nu)(x-x_0)_T}}\right| \le c_{10} E(0){+\varepsilon C_{13}\intt{E(t)}}.\label{estimat_integ_17}
\end{align}
\begin{align}
	\intt{\intgg{f(x)| z^\prime|^2}} +\intt{\intgg{h(x)| z|^2}} \le c_{11}E(0) + C_{14}\intt{\intgg{h(x)| z|^2}}.\label{estimat_integ_18} 
\end{align}
\noindent We proceed as in \cite{KasriBale2025}, by means of Young's inequality and the fact that $\partial_T(x-x_0)_T$, $\div_T(x-x_0)_T$, $\partial_T\nu$  are bounded, we obtain
\begin{align}
	&\left|-2\intt{\intgg{\nabla_Tz^t_\nu\pi\partial_T(x-x_0)_T\pi\nabla_T z_\nu}}\right| + \left|-2\intt{\intgg{\left(\nabla_T z_\nu\right)^t\nabla_T\cdot\left[ z^t_T(\partial_T\nu)(x-x_0)_T\right]}}\right|\nonumber\\
	& + \left| \intt{\intgg{|\nabla_T z_\nu|^2\div_T(x-x_0)_T}}\right|\le C_{13} \intt{\intgg{|\nabla_T z_\nu|^2}} + { \varepsilon \intt{\intgg{| z|^2}}}, \label{estimat_integ_19}
\end{align}
\noindent such that $C_{13}= C_{13}(C_\varepsilon, R)$. 
\noindent Therefore, using Lemma~\ref{Lemma_estimat_sigmau_epsilonu_decomposit_domain} (Appendix), letting $\varepsilon$ tends toward $0$ and combining \eqref{estimat_integ_1}-\eqref{estimat_integ_19} yields to 
\begin{align}\label{47}
	2\intt{E(t)}&\le cE(0) +\overline{C}\intt{\intgg{\left[ \sigmazd\odot\epsilonzd+h(x)| z|^2+ |\nabla_T z_\nu|^2\right]}}\nonumber\\
	&\qquad  +R_{\xi\varepsilon}\intt{\intgwe{|u|^2}},
\end{align}
\noindent with 
\begin{align*}
	c=&c_{13}+ \sum_{i=1}^{11}c_i,\quad \\
	\overline{C}=&\max\left\{C_4+C_5C_\varepsilon+C_7+C_{11}C_\varepsilon; C_5C_\varepsilon+C_7+C_9C_\varepsilon+C_{10}C_\varepsilon+C_{11}C_\varepsilon+C_{12}C_\varepsilon; C_{12}C_\varepsilon; C_{14}\right\}.
\end{align*}
\noindent Hence, the last term of \eqref{47} is estimated as a result of Lemma~\ref{Lemma_estima_nabla_T_delta_nu}, provided that $\tau$ is chosen small enough so that $0<\tau < \frac{2}{\overline{C}}$, which leads to 
\begin{align}
	2\intt{E(t)}&\le cE(0)+R_{\xi\varepsilon}\intt{\intgwe{|u|^2}}+ \frac{\overline{C}\widehat{C}}{\tau}E(0) +\tau\overline{C}\intt{E(t)},\nonumber\\
	&\le C^\star E(0) +\overline{C}\tau\intt{E(t)} +R_{\xi\varepsilon}\intt{\intgwe{|u|^2}},\nonumber\\
	&\le C^\star E(0) +R_{\xi\varepsilon}\intt{\intgwe{|u|^2}},
\end{align}
with $C^\star=c+\frac{\overline{C}\widehat{C}}{\tau}$, which yields to 
\begin{align}
	2TE(T)\le 2\intt{E(t)}\le  C^\star E(0) +R_{\xi\varepsilon}\intt{\intgwe{|u|^2}}.
\end{align}
Consequently, as a result of the compactness-uniqueness alongside the Aubin-Lions theorem arguments, we can estimate the lower order term $\intt{\intgwe{|u|^2}}$. This is ensured by the works of \cite{Lasiecka1993,FrotaMedeirosVicente2014}, where a contradiction argument is developed. The reader would find detailed reasoning in the references above, which we mimic to ensure the results. We get then 
\begin{equation*}
	2TE(T)\le C^\star E(0) +C(T)\left(\intt{\intgo{a(x)|u'|^2}} + \intt{\intgg{g(x)| z^\prime|^2}}\right).
\end{equation*}
\noindent It follows from Lemma~\ref{lemma_energ_decrea} that 
\begin{equation*}
	E(T)\le M (E(0)-E(T)) ,
\end{equation*}
where $M=\frac{C^\star+C(E_0,T)}{2T-C(E_0,T)}$, which is equivalent to
\begin{equation*}
	E(T)\le \frac{M}{1+M} E(0).
\end{equation*}
Combined with the semigroup property, this yields to 
\begin{equation*}
	E(t)\le \mathcal{K}_1 e^{-\frac{t}{\mathcal{K}_2}}E(0),  \forall t\ge 0,
\end{equation*}
with $\mathcal{K}_1=\frac{M+1}{M}>1$ and $\mathcal{K}_2=\frac{T}{\ln\left(\mathcal{K}_1\right)} $. Hence, the proof of Theorem \ref{theorem_uniform_stability} is complete. $\square$
\begin{remark}{\ }
	\begin{enumerate}
		\item The results discussed in this work pertain to linear models (with linear damping). However, as demonstrated in the case of the wave equation with dynamic boundary conditions (see for instance \cite{Cav2009, FrotaVicente20152016}), similar stability results can be expected for nonlinear models, at least for semilinear elastodynamic systems with acoustic boundary conditions.
		\item Theorem \ref{theorem_uniform_stability} remains valid when we allow $\Omega$ to be a general star-shaped domain with respect to $x_0$.
	\end{enumerate}
\end{remark}
\section{Conclusion and open problems}
In this work, we studied a class of hyperbolic equations equipped with Dirichlet and acoustic boundary conditions as a model for the elastodynamic equation with localized damping. First, we proved that the system generated a $C_0$-semigroup of contractions on an appropriate Hilbert space. Then, under specific geometrical conditions, we established the exponential stability of the system via the multiplier method. In particular, we showed that the energy decays exponentially when the dissipative feedback $a(x)u'$ is supported only on a subset of $\Omega$, i.e., supp $a(x)\subset\omega$. However, several important questions remain open and deserve further investigation in the future. The results presented in this paper rely crucially on the geometric conditions (\ref{geom_cond1}) and (\ref{geom_cond2}), which constitutes a particular case of the geometric control condition (cf. \cite{Kliu}), and is used to derive the exponential decay rates of the system. Whether this geometric assumption can be relaxed while still achieving stability results remains an open question. Thus, it would be of significant interest to investigate whether the techniques developed in this work can be effectively combined with alternative approaches such as the trace method introduced by Lasiecka and Tataru \cite{Lasiecka1993} beyond the multiplier method employed here. Such an extension could broaden the applicability of the results, enabling the treatment of more general geometric configurations as well as semilinear terms. Additionally, a worthwhile direction for future research would be to analyze the stability properties of problem (\ref{syst_PDE}) in domains with geometric singularities of polygonal or polyhedral type (cf. \cite{Masrour}). Another interesting problem would be to examine the boundary stabilization of elastodynamic systems, where damping is imposed solely through acoustic boundary conditions and without internal dissipation.
\section{Appendix}
\begin{lemma}\label{Lemma_estimat_sigmau_epsilonu_decomposit_domain}
	Let $\Omega$ be a bounded, open, connected set in $\mathbb{R}^3$, having a boundary 
	$\Gamma = \partial \Omega$ of class $C^3$. Let $T > 0$ be large enough. Then, for every solution 
	$(u,  z)$ of \eqref{syst_PDE}, there exists positive constants $c_{13}$ and $R_{\xi\varepsilon}$ such that
	\begin{equation}\label{estimat_sigmau_epsilonu_decomposit_domain}
		\intt{\intgg{\left(\sigmau\odot\epsilonu-|u^\prime|^2\right)}}\le c_{13} E(0)+ R_{\xi\varepsilon}\intt{\int_{\omega_\varepsilon} |u|^2dx}
	\end{equation}
\end{lemma}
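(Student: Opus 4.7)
The strategy is to apply a Rellich--Ne\v{c}as-type multiplier identity with a carefully chosen vector field, exploiting the separation $\overline{\Gamma_0}\cap\overline{\Gamma_1}=\emptyset$. I would first fix a $C^1$ field $h:\overline{\Omega}\to\mathbb{R}^3$ with $h_{|\Gamma_1}=\nu$ and $h\equiv 0$ on $\omega_{\varepsilon/2}$; such an $h$ exists because $\overline{\Gamma_0}$ and $\overline{\Gamma_1}$ are disjoint, so the outward normal can be smoothly extended from $\Gamma_1$ into $\Omega$ and multiplied by a cutoff supported away from $\omega_{\varepsilon/2}$. Multiplying $\eqref{syst_PDE}_1$ by $2(h\cdot\nabla u)$, integrating on $\Omega\times(0,T)$, and performing the Rellich integration by parts in space together with one time integration by parts on the $u''$ term, the choice $h\cdot\nu=1$ on $\Gamma_1$ and $h\cdot\nu=0$ on $\Gamma_0$ isolates the target boundary integral cleanly on the left, yielding an identity of the schematic form
\begin{equation*}
\intt{\intgg{\left(\sigmau\odot\epsilonu-|u'|^2\right)}}=\mathcal{I}_1+\mathcal{I}_2+\mathcal{I}_3+\mathcal{I}_4,
\end{equation*}
where $\mathcal{I}_1=-2\bigl[\intgo{u'\cdot(h\cdot\nabla u)}\bigr]_0^T$, $\mathcal{I}_2$ collects the Rellich interior terms built from $\mathrm{div}\,h$ and $\nabla h$, $\mathcal{I}_3=\pm 2\intt{\intgg{(\sigmau\cdot\nu)\cdot(h\cdot\nabla u)}}$ is the boundary contribution produced by integration by parts, and $\mathcal{I}_4=2\intt{\intgo{a(x)u'\cdot(h\cdot\nabla u)}}$ absorbs the damping.

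Each $\mathcal{I}_j$ is then bounded separately. Cauchy--Schwarz together with Korn's inequality give $|\mathcal{I}_1|\le C E(0)$. The interior term $\mathcal{I}_2$ is pointwise dominated by $C(|u'|^2+|\nabla u|^2)$ on $\mathrm{supp}(h)$ and hence by $C\intt{E(t)}\le CT E(0)$, which can be folded into $c_{13}$ since $T$ is a fixed parameter; moreover $\nabla h$ is supported in a transition annulus contained in $\omega_\varepsilon\setminus\omega_{\varepsilon/2}$, and on this region (where $u$ has no boundary constraint) a Korn--Poincar\'e-type cutoff argument trades $\int|\nabla u|^2$ for $\int\sigmau\odot\epsilonu+\int|u|^2$, producing precisely the lower-order contribution $R_{\xi\varepsilon}\intt{\intgwe{|u|^2}}$. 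For $\mathcal{I}_3$ I would substitute the impenetrability condition $\sigmau\cdot\nu=z'$ and apply Young's inequality: the $|z'|^2$ piece is absorbed through the dissipation identity \eqref{egalite_energy_decreas} of Lemma~\ref{lemma_energ_decrea}, while the remaining $|\partial_\nu u|^2$ piece is handled by the decomposition described below. For $\mathcal{I}_4$, splitting $au'\cdot(h\cdot\nabla u)$ via Young sends one half into $\int a|u'|^2\le E(0)$ (dissipation) and the other into a term of order $\intt{E(t)}$ absorbed into $c_{13}$.

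The hard part is the estimation of $\mathcal{I}_3$: the trace $(h\cdot\nabla u)|_{\Gamma_1}=\partial_\nu u$ carries one more derivative than the natural energy norm controls, and the acoustic relation $\sigmau\cdot\nu=z'$ only fixes the normal component of the full Cauchy traction, not the entire normal derivative of $u$. To close this I would invoke the tangential/normal decomposition $\sigmau=\sigma_\nu(u)\nu\otimes\nu+\sigma_S(u)+\sigma_T^0(u)$ supplied in the Notations section, which allows $|\partial_\nu u|^2$ on $\Gamma_1$ to be re-expressed in terms of $|z'|^2$, the tangential stress--strain $\sigma_T^0(u)\odot\epsilon_T^0(u)$ (absorbable back into the LHS with a small parameter and Lemma~\ref{Lemma_estima_nabla_T_delta_nu}), and tangential derivatives that can be rerouted to interior quantities via Stokes' formula \eqref{stokes_formula}. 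Once this delicate boundary decomposition is carried out and the estimates are collected, one obtains the asserted inequality with $c_{13}$ (possibly $T$-dependent, but admissible since $T$ is a fixed large parameter in the statement) and $R_{\xi\varepsilon}$ encoding the cutoff-dependent constants.
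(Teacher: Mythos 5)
Your first step coincides with the paper's: both use a Rellich-type multiplier $2(k\cdot\nabla u)$ with a Lipschitz field equal to $\nu$ on the boundary (the paper takes $k$ from Lemma~3.1 of \cite{Lionstome1}, supported in a collar) so that the boundary contribution isolates $\intt{\intgg{(\sigmau\odot\epsilonu-|u'|^2)}}$. The genuine gap lies in what you do with the interior remainder. You bound $\mathcal{I}_2$ by $C\intt{E(t)}\le CTE(0)$ and propose to ``fold it into $c_{13}$''; but this lemma feeds into the final absorption $2TE(T)\le 2\intt{E(t)}\le C^\star E(0)+\dots$, and a contribution $C_0\,T\,E(0)$ with a generic, non-small $C_0$ (coming from $\|\nabla h\|_{L^\infty}$, $\|\mathrm{div}\,h\|_{L^\infty}$ and Korn constants) makes $C^\star$ grow like $C_0T$, so the resulting inequality $(2T-C^\star)E(T)\le\dots$ yields nothing unless $C_0<2$, which you cannot guarantee. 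Your Korn--Poincar\'e trade does not repair this: after exchanging $\int|\nabla u|^2$ for $\int\sigmau\odot\epsilonu+\int|u|^2$ on the transition annulus, the elastic part is still of the order of $E(t)$ and remains uncontrolled; only the $\int|u|^2$ piece is genuinely lower order.

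The paper closes this with a second multiplier that your proposal omits, and it is the only place where the localized damping hypothesis is used structurally. After the Rellich step one is left with a term $C\intt{\intgwed{\sigmau\odot\epsilonu}}$, the elastic energy in the collar $\omega_{\varepsilon/2}$. One then multiplies the elastic equation by $2\xi_\varepsilon u$, where $\xi_\varepsilon$ is the Lions cutoff ($\xi_\varepsilon=1$ on $\omega_{\varepsilon/2}$, $\mathrm{supp}\,\xi_\varepsilon\subset\omega$) with the quadratic-vanishing property $|\nabla\xi_\varepsilon|^2/\xi_\varepsilon\le R_{\xi\varepsilon}$. The cross term is split as $2|\sigmau\odot((\nabla\xi_\varepsilon)u)|\le\xi_\varepsilon\,\sigmau\odot\epsilonu+\frac{|\nabla\xi_\varepsilon|^2}{\xi_\varepsilon}|u|^2$: the first piece is absorbed into the left-hand side, the second gives exactly $R_{\xi\varepsilon}\intt{\intgwe{|u|^2}}$, and the kinetic term $2\intt{\intgwe{\xi_\varepsilon|u'|^2}}$ is controlled by $a_0^{-1}\|\xi_\varepsilon\|_{L^\infty}\intt{\intgwe{a(x)|u'|^2}}\le C(E(0)-E(T))$ thanks to \eqref{cond_def_a_damping}. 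Without this step neither the constant $R_{\xi\varepsilon}$ nor the specific lower-order term in \eqref{estimat_sigmau_epsilonu_decomposit_domain} can be produced, and the assumption $a\ge a_0$ on $\omega$ is never exploited beyond the trivial dissipation bound on $\mathcal{I}_4$.
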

\begin{proof}
	The proof is based on the works of \cite{Lionstome1}, see also \cite{Vicente2016} and references therein. The idea consists on constructing a tabular neighborhood as in figure \ref{drawing2}, and through	Lemma~3.1 in \cite{Lionstome1} we ensures the existence of a vector field $k\in \left(W^{1,\infty}(\Omega)\right)^3$ with $k=\nu$ on $\Gamma$. Recall  $\eqref{egalite_fond_I}$ and let put $q=k$, which yields to 
	\begin{align}
		&	-\left[2\intgwed{u^\prime (k\odot\nabla u)}\right]_0^T + \intt{\intgwed{\div  k\left(\sigmau\odot\epsilonu -|u^\prime|^2\right)}} -2\intt{\intgg{\sigmau\odot(\nabla k\nabla u)}}\nonumber \\
		& -2\intt{\intgwed{a(x)u^\prime (k\cdot\nabla u)}} +\intt{\intgg{\left( |u^\prime|^2 -\sigmau\odot\epsilonu\right)}}\nonumber\\
		& +2\intt{\intgg{ z^\prime(k\odot\nabla u)}} =0,
	\end{align}
	thus \begin{align}\label{equality_above}
		\intt{\intgg{\left(\sigmau\odot\epsilonu-|u^\prime|^2\right)}}=&	-\left[2\intgwed{u^\prime (k\odot\nabla u)}\right]_0^T + \intt{\intgwed{\div k\left(\sigmau\odot\epsilonu -|u^\prime|^2\right)}}\nonumber\\
		& -2\intt{\intgg{\sigmau\odot(\nabla k\nabla u)}}
		-2\intt{\intgwed{a(x)u^\prime (k\odot\nabla u)}}\nonumber\\ 
		&+2\intt{\intgg{ z^\prime(k\odot\nabla u)}},
	\end{align}
	\noindent each term of \eqref{equality_above} is estimated similarly as before and we infer 
	\begin{equation}\label{first_sigmau_epsilonu}
		\intt{\intgg{\left(\sigmau\odot\epsilonu-|u^\prime|^2\right)}}\le c_{12} E(0) + C_{14} \intt{\intgwed{\sigmau\odot\epsilonu}}.
	\end{equation}
	\begin{figure}[!h]
		\begin{center}
			\includegraphics[scale=0.8]{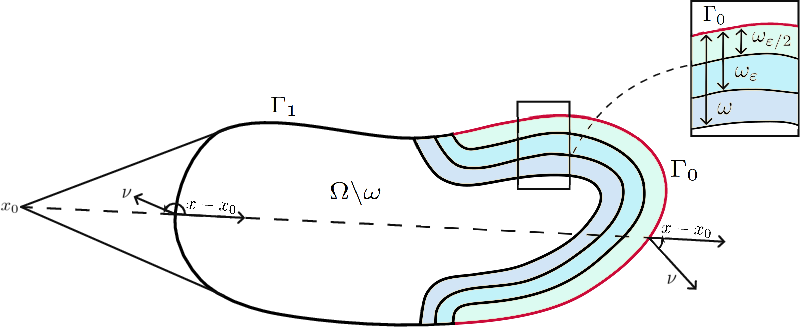}
			\captionof{figure}{ Illustation of collars, $\omega,\ \omega_\varepsilon,\ \omega_{\varepsilon/2}$, in the $\mathbb{R}^2$ case and an $x_0$ satisfying geometrical assumptions \eqref{geom_cond1}. \eqref{geom_cond2}} 
			\label{drawing2}
		\end{center}
	\end{figure}
	\noindent Now lets us consider a cutoff function $\xi_\varepsilon\in W^{1,\infty}(\Omega)$ (see \cite{Lionstome1} Lemma~2.4 Chap.VII) given by
	\begin{equation}\label{xi_varepsilon_definition}
		\xi_\varepsilon=\left\{ 
		\begin{array}{c c c}
			{1} &{ae} &{\omega_{\varepsilon/2}}\\
			{0\le \xi_\varepsilon\le 1} &ae &{\Omega}\\
			{0}& ae &{\Omega\backslash\omega}
		\end{array} 
		\right.
	\end{equation}
	such that $\displaystyle\frac{|\nabla \xi_\varepsilon|}{\xi_\varepsilon}\le R_{\xi\varepsilon}=\frac{R_\xi}{\varepsilon^2}$, where $R_\xi$ is a positive constant. Going back to \eqref{egalite_fond_II} and replacing $\psi$ by $\xi_\varepsilon$
	
	\begin{align}
		&2\left[\intgwe{u^\prime\xi_\varepsilon\ u}\right]_0^T    -2\intt{\intgwe{\xi_\varepsilon|u^\prime|^2}} -2\intt{\intgwe{ z^\prime\xi_\varepsilon u}}+ 2\intt{\intgwe{a(x)u^\prime\xi_\varepsilon u}}\nonumber\\
		&+2\intt{\intgwe{(\sigmau\odot\epsilonu)\xi_\varepsilon}} +2\intt{\intgwe{\sigmau\odot\left((\nabla\xi_\varepsilon)u\right)}} =0,
	\end{align}
	
	by rearranging the latter we obtain 
	
	\begin{align}
		2\intt{\intgwe{(\sigmau\odot\epsilonu)\xi_\varepsilon}}=& -2\left[\intgwe{u^\prime\xi_\varepsilon\ u}\right]_0^T  -2\intt{\intgwe{\sigmau\odot\left((\nabla\xi_\varepsilon)u\right)}}  \nonumber\\
		& +2\intt{\intgwe{\xi_\varepsilon|u^\prime|^2}}-2\intt{\intgwe{a(x)u^\prime\xi_\varepsilon u}}\nonumber\\
		&+2\intt{\intgwe{ z^\prime\xi_\varepsilon u}}.
	\end{align}
	
	\noindent  Thus, using the same arguments as before, each term in the previous expression is assessed separately, leading to the following 
	\begin{equation}
		\begin{aligned}
			\left|-2\intt{\intgwe{\sigmau\odot\left((\nabla\xi_\varepsilon)u\right)}}\right|& \le \intt{\intgwe{\xi_\varepsilon(\sigmau\odot\epsilonu)}} + \intt{\intgwe{\frac{|\nabla\xi_\varepsilon|^2}{\xi_\varepsilon}|u|^2}},\\
			&\le \intt{\intgwe{\xi_\varepsilon(\sigmau\odot\epsilonu)}} + R_{\xi\varepsilon}\intt{\intgwe{|u|^2}}.
		\end{aligned}
	\end{equation}
	\begin{equation}
		\begin{aligned}
			\left|2\intt{\intgwe{a(x)u^\prime\xi_\varepsilon u}}\right|&\le \|a(x)\|_{L^\infty} C_\varepsilon\intt{\intgwe{a(x)|u^\prime|^2}}  + \varepsilon\|\xi_\varepsilon\|_{L^\infty}^2\intt{\intgwe{|u|^2}}.
		\end{aligned}
	\end{equation}
	\begin{equation}
		\begin{aligned}
			2\intt{\intgwe{\xi_\varepsilon|u^\prime|^2}}\le \frac{\|\xi_\varepsilon\|_{L^\infty}}{a_0}\intt{\intgwe{a(x)|u^\prime|^2}}. 
		\end{aligned}
	\end{equation}
	\begin{equation}
		\begin{aligned}
			2\intt{\intgg{ z^\prime\xi_\varepsilon u}} &\le \frac{\|\xi_\varepsilon\|_{L^\infty}^2}{g_0}C_\varepsilon\intt{\intgg{g(x)| z^\prime|^2}} + \varepsilon\intt{\intgg{|u|^2}}, \\
			&\le \frac{\|\xi_\varepsilon\|_{L^\infty}^2}{g_0}C_\varepsilon E(0) +\varepsilon \intt{\intgg{|u|^2}}.
		\end{aligned}
	\end{equation}

	\begin{equation}
		\begin{aligned}
			2\left[\intgwe{u^\prime\xi_\varepsilon u}\right]_0^T &\le \|\xi_\varepsilon\|_{L^\infty} C_\varepsilon \left[\intgwe{|u^\prime|^2}\right]_0^T + \left[\varepsilon\intgwe{u^2}\right]_0^T,\\
			&\le \|\xi_\varepsilon\|C_\varepsilon E(0) +\varepsilon\left[\intgwe{u^2}\right]_0^T.
		\end{aligned}
	\end{equation}
	
	\noindent Combining inequalities above and choosing $\varepsilon$ small enough we get 
	
	\begin{equation}\label{estimat_sigmau_espilonu_xi_eps}
		\intt{\intgwe{(\sigmau\odot\epsilonu)\xi_\varepsilon}}\le c_{13} E(0) +R_{\xi\varepsilon}\intt{\intgwe{|u|^2}}, 
	\end{equation}
	
	\noindent where $c_{13}=c_{13}\left(\|\xi_\varepsilon\|_{L^\infty},C_\varepsilon,g_0^{-1},a_0^{-1}\right) $. Thus, taking into account \eqref{xi_varepsilon_definition} and the domain configuration as in Figure \ref{drawing2}, yields to  
	
	$$\int_{\omega_{\varepsilon/2}}\sigmau\odot\epsilonu\ dx=\int_{\omega_{\varepsilon/2}}(\sigmau\odot\epsilonu)\xi_\varepsilon\ dx\le \int_{\omega_{\varepsilon}}(\sigmau\odot\epsilonu)\xi_\varepsilon\ dx. $$
	
	\noindent Finally, injecting \eqref{estimat_sigmau_espilonu_xi_eps} in the latter estimation and therefor in \eqref{first_sigmau_epsilonu} leads to $\eqref{estimat_sigmau_epsilonu_decomposit_domain}$.  
\end{proof}

\end{document}